\def\R{\mathbb R}
\def\diam{\textrm{diam}}
\def\grad{\nabla}
\newtheorem{Thm}{Theorem}
\newtheorem{Prop}{Proposition}
\newtheorem{Lem}{Lemma}
\newtheorem*{Thm*}{Theorem}
\newtheorem{Rem}{Remark}
\newtheorem{Def}{Definition}
\newtheorem{Cor}{Corollary}
\begin{document}

\title[The Neumann eigenvalue problem for the $\infty$-Laplacian]{The Neumann eigenvalue problem for the $\infty$-Laplacian}

\author{L.~Esposito}\address{Dipartimento di Matematica e Informatica, via Ponte Don Melillo, 84084 Fisciano (SA)}\email{luesposi@unisa.it}
\author{B.~Kawohl}\address{Mathematisches Institut, Universit\"at zu K\"oln,
50923 K\"oln}\email{kawohl@math.uni-koeln.de}
\author{C.~Nitsch}\address{Dipartimento di Matematica e Applicazioni, via Cintia, 80126 Napoli}\email{c.nitsch@unina.it}
\author{C.~Trombetti}\address{Dipartimento di Matematica e Applicazioni, via Cintia, 80126 Napoli}\email{cristina@unina.it}

\begin{abstract}The first nontrivial eigenfunction of the Neumann eigenvalue problem for the $p$-Laplacian, suitable normalized, converges to a viscosity solution of an eigenvalue problem for the $\infty$-Laplacian. We show among other things that the limiting eigenvalue, at least for convex sets, is in fact the first nonzero eigenvalue of the limiting problem. We then derive a number consequences, which are nonlinear analogues of well-known inequalities for the linear (2-)Laplacian.
\end{abstract}
\maketitle
{\small

\keywords{\noindent {\bf Keywords:}
Neumann eigenvalues, viscosity solutions, infinity Laplacian }

\smallskip

\subjclass{\noindent {\bf 2010 MSC:} 35P30, 35P15, 35J72, 35D40, 35J92, 35J70}

\section{Introduction and statements}

In this paper we study the $\infty$-Laplacian eigenvalue problem under Neumann boundary conditions
\begin{equation}\label{infn}
\begin{dcases}
{\rm {min}}  \{| \grad u| - \Lambda u, -\Delta_{\infty} u\} = 0 & \mbox{in}\ \{u >0\}\cap\Omega\\
{\rm {max}} \{-| \grad u|  - \Lambda u , -\Delta_{\infty} u\}=0 & \mbox{in}\ \{u <0\}\cap\Omega\\
-\Delta_{\infty} u =0,& \mbox{in} \ \{u =0\}\cap\Omega\\
\frac{\partial u}{\partial \nu}=0 & \mbox{on} \ \partial\Omega.\\
\end{dcases}
\end{equation}
A solution $u$ to this problem has to be understood in the viscosity sense, and the Neumann eigenvalue $\Lambda$ is some nonnegative real constant. For $\Lambda=0$ problem (\ref{infn}) has constant solutions. We consider those as trivial. Our main result is 

\begin{Thm}\label{thm_main}
Let $\Omega$ be a smooth bounded open convex set in $\R^n$ then a necessary condition for the existence of nonconstant continuous solutions u to \eqref{infn} is
\begin{equation}\label{mainresult}
\Lambda\geq\Lambda_\infty:=\frac2{\diam(\Omega)}.\end{equation}
Here $diam(\Omega)$ denotes the diameter of $\Omega$.
Moreover problem \eqref{infn} admits a Lipschitz solution when $\Lambda=\frac2{\diam(\Omega)}$. 
\end{Thm}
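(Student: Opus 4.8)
The plan is to prove the two assertions separately. For the lower bound $\Lambda\geq\Lambda_\infty=2/\diam(\Omega)$, I would pass to the limit in the corresponding fact for the $p$-Laplacian. Let $u_p$ denote a first nontrivial Neumann eigenfunction of the $p$-Laplacian with eigenvalue $\mu_p$, normalized (say) so that $\max_\Omega u_p = 1$ and $\min_\Omega u_p = -\kappa_p$ for some $\kappa_p\in(0,1]$, or normalized in $L^\infty$. The classical Payne--Weinberger type estimate, together with its sharp $p$-analogue, gives $\mu_p^{1/p}\,\diam(\Omega)\to 2$ (for convex $\Omega$) as $p\to\infty$, and the uniform Lipschitz bounds on $u_p$ (coming from the eigenvalue equation and a Harnack/gradient estimate) let one extract a subsequence converging uniformly to a Lipschitz function $u_\infty$ which is a viscosity solution of \eqref{infn} with eigenvalue $\Lambda = \lim \mu_p^{1/p} = 2/\diam(\Omega)$. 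This shows that the value $2/\diam(\Omega)$ \emph{is attained}; the inequality $\Lambda\geq 2/\diam(\Omega)$ for \emph{any} nonconstant solution must instead be argued intrinsically at the $\infty$-level: at a maximum point $x_M$ with $u(x_M)=\max u>0$ and a minimum point $x_m$ with $u(x_m)=\min u<0$, one uses the viscosity inequalities $|\grad u|\leq \Lambda u$ near maxima (from the first line of \eqref{infn}, since $-\Delta_\infty u\geq 0$ forces $|\grad u|-\Lambda u\leq 0$ there) to bound the growth of $u$ along any curve, and a comparison-with-cones argument for $\infty$-harmonic-type functions to conclude $\max u - \min u \leq \Lambda \cdot \tfrac12\,(\max u - \min u)\,\diam(\Omega)$ after optimizing; convexity enters to guarantee that $x_M$ and $x_m$ can be joined suitably and that the relevant distance is bounded by $\diam(\Omega)$.

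For the construction of an explicit Lipschitz solution when $\Lambda = 2/\diam(\Omega)$, the plan is to guess $u(x) = \mathrm{dist}(x, H_-) - \tfrac12\diam(\Omega)$, i.e.\ a suitably shifted and scaled distance function to one of the two parallel supporting hyperplanes realizing the diameter — more precisely, pick two points $a,b\in\overline\Omega$ with $|a-b| = \diam(\Omega)$, set $e = (b-a)/|a-b|$, and try $u(x) = \langle x - \tfrac{a+b}{2}, e\rangle$ if $\Omega$ is a "slab-like" convex body, or more robustly $u(x) = g(\mathrm{dist}(x,\{\langle y-a,e\rangle = 0\}))$ for an appropriate one-variable profile $g$. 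Since $|\grad u| \equiv 1$ and $\Delta_\infty u \equiv 0$ for a linear function, one checks directly that the first three lines of \eqref{infn} hold provided $|u|\leq \Lambda^{-1} = \tfrac12\diam(\Omega)$ everywhere, which holds exactly because the extent of $\Omega$ in direction $e$ is at most $\diam(\Omega)$; one then arranges by the choice of additive constant that $u$ changes sign. The Neumann condition $\partial u/\partial\nu = 0$ will in general \emph{fail} for a genuinely linear $u$, so the honest construction has to bend the level sets near $\partial\Omega$: I expect one takes $u = \varphi\circ d$ where $d$ is the distance to a hyperplane and $\varphi$ is built so that level sets meet $\partial\Omega$ orthogonally, or alternatively one verifies that the uniform limit $u_\infty$ obtained in the first part of the proof is itself such a solution with $\Lambda = 2/\diam(\Omega)$, which automatically satisfies the Neumann condition in the viscosity sense by stability.

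The main obstacle, I expect, is the lower bound direction, and specifically making the comparison-with-cones argument rigorous at points where $u$ vanishes or where $|\grad u|$ is small: the three-regime structure of \eqref{infn} (the $\{u>0\}$, $\{u<0\}$, and $\{u=0\}$ pieces) means one cannot treat $u$ as globally $\infty$-harmonic nor globally eikonal, so one must carefully patch the cone comparison across the nodal set, using that $u$ is $\infty$-harmonic there. A clean way around this is to reduce to the $p$-Laplacian estimate and pass to the limit — i.e.\ show every viscosity solution of \eqref{infn} arises as (or is dominated in the relevant sense by) a limit of $p$-eigenfunctions — but this requires a uniqueness/classification statement that may not be available, so the self-contained $\infty$-level proof via cone comparison is the delicate core. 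The existence half, by contrast, should be essentially a direct verification once the right candidate (a mollified/bent distance function, or the limit $u_\infty$) is written down.
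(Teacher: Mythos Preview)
Your overall architecture is right: existence at $\Lambda=2/\diam(\Omega)$ via the limit $u_\infty$ of $p$-eigenfunctions, and the lower bound via a cone-comparison argument directly at the $\infty$-level. This matches the paper, which proves existence as Theorem~\ref{main} and the lower bound as Proposition~\ref{prop_eigen}. Your explicit linear/distance candidates are indeed dead ends (as you suspect), and the paper does not pursue them.

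However, your sketch of the lower bound contains two genuine errors that would prevent the argument from closing.

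\medskip
\textbf{(1) The inequality you extract is backwards, and the normalization is missing.} You write that ``$-\Delta_\infty u\ge 0$ forces $|\nabla u|-\Lambda u\le 0$''. This does not follow: the supersolution property gives $-\Delta_\infty u\ge 0$ only in the viscosity sense (not strictly), so the subsolution inequality $\min\{|\nabla u|-\Lambda u,\,-\Delta_\infty u\}\le 0$ does not force the first term to be nonpositive. What the paper does instead is normalize so that $\max_{\bar\Omega} u=1/\Lambda$, whence $\Lambda u\le 1$ on $\Omega_+$, and then use only the \emph{subsolution} inequality to get $\min\{|\nabla u|-1,\,-\Delta_\infty u\}\le 0$ on $\Omega_+$. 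One then compares with the perturbed cone $g_{\epsilon,\gamma}(x)=(1+\epsilon)|x-x_0|-\gamma|x-x_0|^2$, $x_0\in\{u\le 0\}$, which is a strict classical supersolution of that same operator, and applies Jensen's comparison principle. This yields $u(x)\le |x-x_0|$, hence $d^+:=\sup_{\Omega_+}\mathrm{dist}(\cdot,\{u=0\})\ge 1/\Lambda$, and symmetrically $d^-\ge 1/\Lambda$, so $\diam(\Omega)\ge d^++d^-\ge 2/\Lambda$.

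\medskip
\textbf{(2) You misidentify where convexity enters.} Convexity is \emph{not} used to ``join $x_M$ and $x_m$'' or to bound geodesic distance by $\diam(\Omega)$. It is used at the boundary step of the cone comparison: one must rule out that $\inf_{\Omega_+}(g_{\epsilon,\gamma}-u)$ is attained on $\partial\Omega\cap\partial\Omega_+$. If it were, the viscosity Neumann condition \eqref{BC} would apply to the test function $g_{\epsilon,\gamma}$, and the contradiction comes from $\partial g_{\epsilon,\gamma}/\partial\nu>0$. That positivity is equivalent to $\langle x-x_0,\nu(x)\rangle>0$ for $x\in\partial\Omega$, $x_0\in\Omega$, which is exactly convexity of $\Omega$. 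This is the step that fails for nonconvex domains and is the reason the theorem is stated only for convex $\Omega$.

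\medskip
Finally, you assume without comment that a nonconstant solution has $\max u>0$ and $\min u<0$. This requires proof (the paper's Lemma~\ref{sign}), again via $\infty$-harmonic comparison and a Hopf-type argument at a boundary minimum, and is not automatic from the equation.
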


If $\Omega$ is merely bounded, connected and has Lipschitz boundary, then the notion of diameter can be generalized as in Definition \ref{intrinsic}. In that case solutions of (\ref{infn}) exist, see Section \ref{limsec} or \cite{RS}. However, it is still unclear whether $\Lambda_\infty$ is always the first eigenvalue.

Theorem \ref{thm_main} has a number of interesting consequences, one of which we list right here. By the isodiametric inequality we may conclude
\begin{Cor} If $\Omega^*$ denotes the ball of same volume as $\Omega$, then the Szeg\"o-Weinberger inequality $\Lambda_\infty(\Omega)\leq\Lambda_\infty(\Omega^*)$ holds.
\end{Cor}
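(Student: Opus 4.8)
The plan is to reduce the inequality to a purely geometric statement via the explicit value of $\Lambda_\infty$ supplied by Theorem \ref{thm_main}. Since $\Lambda_\infty(\Omega)=2/\diam(\Omega)$ by \eqref{mainresult}, and likewise $\Lambda_\infty(\Omega^*)=2/\diam(\Omega^*)$ because the ball $\Omega^*$ is convex, the claimed bound $\Lambda_\infty(\Omega)\leq\Lambda_\infty(\Omega^*)$ is equivalent to the geometric inequality $\diam(\Omega^*)\leq\diam(\Omega)$.

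To establish the latter I would invoke the isodiametric inequality: for every bounded measurable $A\subset\R^n$ one has $|A|\leq\omega_n\big(\diam(A)/2\big)^n$, where $\omega_n=|B_1|$, with equality if and only if $A$ is a ball up to a null set. Applying this to $\Omega$ gives $|\Omega|\leq\omega_n\big(\diam(\Omega)/2\big)^n$. Writing $\Omega^*=B_{r^*}$, so that $\omega_n (r^*)^n=|\Omega^*|=|\Omega|$, we deduce $\omega_n (r^*)^n\leq\omega_n\big(\diam(\Omega)/2\big)^n$, hence $r^*\leq\diam(\Omega)/2$, i.e. $\diam(\Omega^*)=2r^*\leq\diam(\Omega)$. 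Combining the two steps yields $\Lambda_\infty(\Omega)=2/\diam(\Omega)\leq 2/\diam(\Omega^*)=\Lambda_\infty(\Omega^*)$, which is the assertion; moreover equality forces $\Omega$ to be a ball, by the equality case of the isodiametric inequality.

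If one wants the same statement for a merely bounded, connected Lipschitz domain $\Omega$, with $\diam$ understood in the intrinsic sense of Definition \ref{intrinsic}, one only has to add the elementary observation that the intrinsic (geodesic) diameter dominates the Euclidean one, so that $\Lambda_\infty(\Omega)\leq 2/\diam_{\mathrm{eucl}}(\Omega)$; the same chain of inequalities then closes the argument, since $\Omega^*$ is convex and its intrinsic and Euclidean diameters coincide.

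There is essentially no hard step in this corollary: the only points requiring care are making sure the formula $\Lambda_\infty=2/\diam$ is used on the correct side—the ball maximizes $\Lambda_\infty$ precisely because it minimizes the diameter at fixed volume—and, in the non-convex case, citing the right monotonicity of $\Lambda_\infty$ under replacing the Euclidean by the intrinsic diameter. The genuine content is entirely contained in Theorem \ref{thm_main}; the present statement is merely its geometric translation.
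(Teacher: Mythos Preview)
Your proposal is correct and follows precisely the paper's approach: the paper derives the corollary in one line, ``By the isodiametric inequality we may conclude,'' and you have simply written out that line in full, including the equality case and the extension to nonconvex Lipschitz domains via the trivial bound $\diam_{\mathrm{intrinsic}}\geq\diam_{\mathrm{eucl}}$.
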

For the case of the ordinary Laplacian $(p=2)$ this result was shown in \cite{S} and \cite{W}. For the 1- Laplacian case and convex plane $\Omega$ we refer to  \cite{EFKNT}.
 While the Faber-Krahn inequality $\lambda_p(\Omega^*)\leq \lambda_p(\Omega)$ holds for any $p$, the Szeg\"o-Weinberger inequality has resisted attempts to be generalized to general $p$, and for general $p$ we are unaware of any results in this direction. \\
The reason why we call problem \eqref{infn}  $\infty$-Laplacian eigenvalue problem under Neumann boundary conditions is that \eqref{infn} can be derived as the limit $p\to \infty$ of  Neumann eigenvalue problems for the $p$-Laplacian
\begin{equation}\label{neup}
\begin{dcases}
-\Delta_p u=\Lambda_p^p |u|^{p-2}u & \mbox{in }\Omega\\
|\grad u|^{p-2}\frac{\partial u}{\partial n}=0 & \mbox{on }\partial\Omega,
\end{dcases}
\end{equation}
whenever $\Omega$ is a bounded open Lipschitz set of $\R^n$. 

For the Dirichlet $p$-Laplacian eigenvalue problem on open bounded sets $\Omega\subset\R^n$ 
\begin{equation}\label{dirp}
\begin{dcases}
-\Delta_p v=\lambda_p^p |v|^{p-2}v & \mbox{in }\Omega\\         
v=0 & \mbox{on }\partial\Omega,
\end{dcases}
\end{equation} 
the same limit was studied by Juutinen, Lindqvist and Manfredi in \cite{JLM,JL}. They formulate and fully investigate the so-called Dirichlet $\infty$-Laplacian eigenvalue problem employing the notion of viscosity solutions. Recall for instance that, when $\lambda_p$ denotes for all $p\ge 1$ the first nontrivial eigenvalue of \eqref{dirp}, the limit yields

\[\lim_{p\to\infty}\lambda_p=\lambda_\infty:=\frac1{R(\Omega)},\]
where $R(\Omega)$ denotes inradius, i.e.  the radius of the largest ball contained in $\Omega$. Moreover, they identify the limiting eigenvalue problem as
\begin{equation}\label{dirinf}
\begin{dcases}
\min\{|\grad v|-\lambda v, -\Delta_\infty v\}=0 & \mbox{in }\Omega\\
v=0 & \mbox{on }\partial\Omega,
\end{dcases}
\end{equation}
in the sense that nonnegative normalized eigenfunctions of \eqref{dirp} converge, up to a subsequence, to a positive Lipschitz function $v_\infty$ which solves   \eqref{dirinf} in the viscosity sense with $\lambda(\Omega)=\lambda_\infty(\Omega)$. Finally they also show that the infinity Laplacian eigenvalue problem \eqref{dirinf} admits nontrivial solutions if and only if $\lambda\ge\lambda_\infty$ and positive solutions if and only if $\lambda=\lambda_\infty$. Therefore they call $\lambda_\infty$ the  principal eigenvalue of the $\infty$-Laplacian  eigenvalue problem under Dirichlet boundary condition.\\

In the Neumann case (see \cite{RS}) and for any bounded connected $\Omega$ with Lipschitz boundary the limiting problem $p\to\infty$ for (\ref{neup}) is given by \eqref{infn}.

In analogy to the Dirichlet case, the first nontrivial eigenvalues of (\ref{neup}) satisfy 
\begin{equation}\label{Lambda_limit}
\lim_{p\to\infty}\Lambda_p=\Lambda_\infty.
\end{equation}
Our result proves that on the class of convex sets the first nontrivial Neumann $p$-Laplacian eigenvalues converge to the first nontrivial Neumann $\infty$-Laplacian eigenvalue, namely $\Lambda=\Lambda_\infty$ is in fact the first nontrivial eigenvalue in \eqref{infn}.

Therefore we can point out some consequences.
\begin{Cor}
For convex $\Omega$ the first positive Neumann eigenvalue
$\Lambda_\infty(\Omega)$ is never larger than the first Dirichlet eigenvalue $\lambda_\infty(\Omega)$. Moreover $\lambda_\infty(\Omega)=\Lambda_\infty(\Omega)$ if and only if $\Omega$ is a ball.
\end{Cor}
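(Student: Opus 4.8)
The plan is to reduce the statement to elementary geometry via the two formulae already at our disposal. By Theorem \ref{thm_main} we know $\Lambda_\infty(\Omega)=2/\diam(\Omega)$, while the result of Juutinen, Lindqvist and Manfredi recalled in the introduction gives $\lambda_\infty(\Omega)=1/R(\Omega)$, where $R(\Omega)$ is the inradius of $\Omega$. Hence the asserted inequality $\Lambda_\infty(\Omega)\le\lambda_\infty(\Omega)$ is exactly the classical geometric inequality $2R(\Omega)\le\diam(\Omega)$, and the equality $\lambda_\infty(\Omega)=\Lambda_\infty(\Omega)$ amounts to $2R(\Omega)=\diam(\Omega)$.

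For the inequality I would first record that, $\Omega$ being bounded convex and open, the inradius is attained: there exists $c\in\Omega$ with $\overline{B_{R(\Omega)}(c)}\subseteq\overline{\Omega}$. Any pair of antipodal points of this ball lies in $\overline\Omega$ and is at mutual distance $2R(\Omega)$, so $\diam(\Omega)=\diam(\overline\Omega)\ge 2R(\Omega)$, which is the claim (convexity is not even used here, only attainment of the inradius). For the equality case, assume $2R(\Omega)=\diam(\Omega)$ and write $R=R(\Omega)$, keeping the inscribed ball $B_R(c)$. Given any $p\in\Omega$ with $p\ne c$, consider the point $q=c-R\,(p-c)/|p-c|\in\partial B_R(c)\subseteq\overline\Omega$; then $|p-q|=|p-c|+R\le\diam(\Omega)=2R$, so $|p-c|\le R$. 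Therefore $\Omega\subseteq\overline{B_R(c)}$, and since in addition $B_R(c)\subseteq\Omega$ and a convex open set coincides with the interior of its closure, we conclude $\Omega=B_R(c)$. The reverse implication is trivial, a ball of radius $R$ having diameter $2R$ and inradius $R$.

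I do not expect any genuine obstacle: no analytic input beyond the two limiting formulae is needed. The only point requiring a little care is the bookkeeping between a set, its closure and its interior — attainment of the inradius, the inclusion $\overline{B_R(c)}\subseteq\overline\Omega$, and upgrading $\overline\Omega=\overline{B_R(c)}$ to $\Omega=B_R(c)$ — all of which are routine for convex $\Omega$ but should be stated explicitly.
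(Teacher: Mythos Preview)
Your proposal is correct and follows precisely the route the paper has in mind. The paper does not spell out a proof of this corollary; it treats the statement as an immediate consequence of the identities $\Lambda_\infty(\Omega)=2/\diam(\Omega)$ (Theorem~\ref{thm_main}) and $\lambda_\infty(\Omega)=1/R(\Omega)$ (from \cite{JLM}), leaving the elementary geometric fact $2R(\Omega)\le\diam(\Omega)$, with equality exactly for balls, as understood. Your argument supplies exactly these missing details.
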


The inequality $\Lambda_2(\Omega)<\lambda_2(\Omega)$   follows from a combination of the Szeg\"o-Weinberger and the Faber-Krahn inequalities, see e.g the books by Bandle or Kesavan \cite{Ba, K}. The strict inequality $\Lambda_p(\Omega)<\lambda_p(\Omega)$ for general $p$ and any convex $\Omega$ has been recently proved  in \cite{BNT} .

\begin{Cor}
For convex $\Omega$ any Neumann eigenfunction associated with $\Lambda_\infty(\Omega)$ cannot have a closed nodal domain inside $\Omega$.
\end{Cor}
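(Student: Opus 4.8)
The plan is to argue by contradiction using the characterization of $\Lambda_\infty$ as the first nontrivial eigenvalue together with the variational/monotonicity structure inherited from the $p$-Laplacian. Suppose that $u$ is a Neumann eigenfunction associated with $\Lambda_\infty(\Omega)$ possessing a closed nodal domain $D$, i.e. an open connected component of $\{u>0\}$ (or $\{u<0\}$) with $\overline D\subset\Omega$, so that $u$ vanishes on $\partial D$. First I would observe that on $D$ the function $u$ then solves the \emph{Dirichlet} $\infty$-eigenvalue problem \eqref{dirinf} on $D$ with eigenvalue parameter $\Lambda_\infty(\Omega)$: indeed the interior equations in \eqref{infn} restricted to $\{u>0\}\cap D$ are exactly the interior equation of \eqref{dirinf}, and the boundary condition $u=0$ on $\partial D$ is automatic since $\partial D$ is an interior nodal set. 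By the Juutinen--Lindqvist--Manfredi results quoted in the introduction, \eqref{dirinf} on $D$ admits a nontrivial solution only if $\Lambda_\infty(\Omega)\ge\lambda_\infty(D)=1/R(D)$, and since $u$ does not change sign in $D$ it is in fact a \emph{positive} (up to sign) solution there, forcing $\Lambda_\infty(\Omega)=\lambda_\infty(D)=1/R(D)$.

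Next I would exploit monotonicity of the inradius and the diameter. Since $\overline D\subset\Omega$ is a proper compactly contained subset and $\Omega$ is convex, one has $R(D)<R(\Omega)$, and moreover for a convex set the inradius is at most half the diameter, $R(\Omega)\le \tfrac12\diam(\Omega)$; combining these gives
\[
\Lambda_\infty(\Omega)=\frac1{R(D)}>\frac1{R(\Omega)}\ge\frac2{\diam(\Omega)}=\Lambda_\infty(\Omega),
\]
a contradiction. (If $D$ is the nodal domain of a sign-changing eigenfunction one must be a touch careful that $\overline D$ is genuinely compactly contained; but a closed nodal domain is by definition one whose closure does not meet $\partial\Omega$, which is exactly what we used.) This already yields the statement, and in fact shows the stronger quantitative gap $\lambda_\infty(D)>\Lambda_\infty(\Omega)$ for any compactly contained $D$.

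The step I expect to be the main obstacle is making rigorous the claim that $u\big|_D$ is a genuine viscosity solution of the Dirichlet problem \eqref{dirinf} on $D$ with the \emph{same} eigenvalue, and in particular that it is a \emph{positive} solution in the sense required to apply the rigidity half of the Juutinen--Lindqvist--Manfredi theorem (positive solutions exist iff $\lambda=\lambda_\infty$). One has to check that test functions touching $u$ from above or below at interior points of $D$ are admissible test functions for \eqref{infn} as well — which is immediate since $D$ is open in $\Omega$ — and that no boundary subtlety at $\partial D$ intervenes because $u=0$ continuously there. An alternative, perhaps cleaner, route that avoids invoking \eqref{dirinf} altogether is to go back to the $p$-Laplacian: if the limiting eigenfunction had a closed nodal domain, an approximation argument (using that, on convex sets, $\Lambda_p\to\Lambda_\infty$ and the eigenfunctions converge) would produce for large $p$ a Neumann $p$-eigenfunction with eigenvalue $\Lambda_p<\lambda_p(D')$ for a slightly enlarged compactly contained $D'$, contradicting the Faber--Krahn-type lower bound $\lambda_p(D')\ge \lambda_p(\Omega^{**})$ and the strict inequality $\Lambda_p(\Omega)<\lambda_p(\Omega)$ from \cite{BNT}; I would present the $\infty$-level argument as the main line and mention this $p$-level argument as a remark.
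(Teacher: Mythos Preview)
Your argument is correct and follows essentially the same route as the paper's proof: assume a closed nodal domain $\Omega'$, note that $u|_{\Omega'}$ is a positive viscosity solution of the Dirichlet $\infty$-eigenvalue problem \eqref{dirinf}, invoke the Juutinen--Lindqvist--Manfredi rigidity to get $\Lambda_\infty(\Omega)=\lambda_\infty(\Omega')=1/R(\Omega')$, and reach a contradiction via $1/R(\Omega')>2/\diam(\Omega)=\Lambda_\infty(\Omega)$. The only cosmetic difference is where the strict inequality comes from: the paper uses $2R(\Omega')\le\diam(\Omega)$ with equality only for balls, while you use $R(D)<R(\Omega)\le\tfrac12\diam(\Omega)$ via compact containment (incidentally, convexity is not needed for $R(D)<R(\Omega)$; compact containment alone suffices).
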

Since a Neumann eigenfunction $u$ for the $\infty$-Laplacian is in general just continuous, a closed nodal line inside $\Omega$ means that there exists an opens subset $\Omega'\subset\Omega$ such that  $u> 0$ in $\Omega'$ (or $<0$ in $\Omega'$) and $u=0$ on $\partial\Omega'$. Assuming that such a nodal line exists, we can use standard arguments. We observe that $u$ is also a Dirichlet eigenfunction on $\Omega'$ with same eigenvalue. We get $\frac2{\diam(\Omega)}=\Lambda_\infty(\Omega)=\lambda_\infty(\Omega')=\frac1{R(\Omega')}\ge\frac2{\diam(\Omega)}$ and notice that the last inequality is strict for all sets other then balls. This proves the Corollary.

\medskip

Next we recall that the Payne-Weinberger inequality states that on any convex subset $\Omega\subset \R^n$ the first non trivial Neumann eigenvalue for the Laplacian is bounded from below  by the quantity $\frac{\pi^2}{\diam(\Omega)^2}$. Recently such an estimate has been generalized to the first non trivial Neumann $p$-Laplacian eigenvalues in \cite{ENT,FNT,  V}  to get
\begin{equation}\label{PWg}
\Lambda_p\ge(p-1)^{1/p}\left(\frac{2\pi}{p\ \diam(\Omega)\sin\frac\pi p}\right).
\end{equation}
As $p\to\infty$ the right hand side in this Payne-Weinberger inequality \eqref{PWg} converges 
\begin{equation*}
\lim_{p\to\infty}
(p-1)^{1/p}\left(\frac{2\pi}{p\ \diam(\Omega)\sin\frac\pi p}\right)=\frac{2}{\diam(\Omega)},
\end{equation*}
and in view of \eqref{Lambda_limit} we may therefore conclude that 
\begin{Cor}
The  Payne-Weinberger inequality \eqref{PWg} for the first Neumann eigenvalue of the $p$-Laplacian becomes an identity for $p=\infty$.
\end{Cor}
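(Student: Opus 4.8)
The plan is to read the statement off from the two convergence facts already displayed together with the main theorem, since nothing genuinely new has to be proved. I would first make precise what ``becomes an identity for $p=\infty$'' means: \eqref{PWg} is a family of inequalities indexed by finite $p$, so the assertion is that the two sides of \eqref{PWg} tend to the same limit as $p\to\infty$, i.e. that the Payne--Weinberger lower bound is asymptotically sharp and that its limiting value is precisely the first Neumann $\infty$-eigenvalue $\Lambda_\infty$.

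First I would record the elementary limit of the right-hand side of \eqref{PWg}: since $(p-1)^{1/p}=\exp\bigl(\tfrac1p\log(p-1)\bigr)\to 1$ and $p\sin\tfrac\pi p\to\pi$ as $p\to\infty$, one gets
\[
\lim_{p\to\infty}(p-1)^{1/p}\left(\frac{2\pi}{p\ \diam(\Omega)\sin\frac\pi p}\right)=\frac{2\pi}{\pi\,\diam(\Omega)}=\frac{2}{\diam(\Omega)}=\Lambda_\infty,
\]
the last equality being exactly \eqref{mainresult} of Theorem \ref{thm_main}, which applies because $\Omega$ is convex. Next I would fix a finite $p$, invoke the inequality \eqref{PWg} in the form $\Lambda_p\ge (p-1)^{1/p}\bigl(\tfrac{2\pi}{p\,\diam(\Omega)\sin(\pi/p)}\bigr)$, and let $p\to\infty$: by \eqref{Lambda_limit} the left-hand side tends to $\Lambda_\infty$, while by the computation above the right-hand side tends to $\Lambda_\infty$ as well, so the limiting inequality reads $\Lambda_\infty\ge\Lambda_\infty$. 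Combined with $\Lambda_\infty=2/\diam(\Omega)$ from Theorem \ref{thm_main}, this shows that \eqref{PWg} degenerates to an equality in the limit, which is the claim.

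The point that needs care is the interpretation rather than the argument: ``$p=\infty$'' must be understood through the limits \eqref{Lambda_limit} and the displayed asymptotics, and once these are in place the corollary is pure bookkeeping. I therefore do not expect a real obstacle here — the substance is carried entirely by Theorem \ref{thm_main} (the identification $\Lambda_\infty=2/\diam(\Omega)$ on convex sets) and by the convergence $\Lambda_p\to\Lambda_\infty$ recalled from \cite{RS}; this corollary merely observes that the Payne--Weinberger bound, which is strict for every finite $p$, becomes sharp precisely in the limit.
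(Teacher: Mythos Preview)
Your proposal is correct and follows essentially the same route as the paper: compute the elementary limit of the right-hand side of \eqref{PWg} to get $2/\diam(\Omega)$, invoke \eqref{Lambda_limit} for the left-hand side, and observe that both limits coincide. One small remark: the identity $\Lambda_\infty=2/\diam(\Omega)$ is a \emph{definition} (see \eqref{mainresult} or Lemma~1) rather than a consequence of Theorem~\ref{thm_main}, and the convergence \eqref{Lambda_limit} holds for any bounded Lipschitz domain; convexity enters only through the validity of \eqref{PWg} itself and through the interpretation of $\Lambda_\infty$ as the first eigenvalue.
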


As a byproduct of our proofs we obtain also the following result, which is related to the hot-spot conjecture. The hot spot conjecture, see \cite{Bu}, says that a first nontrivial Neumann eigenfunction for the linear Laplace operator on a convex domain $\Omega$ should attain its maximum or minimum on the boundary $\partial\Omega$ and the proof of Lemma 1 will show that $u_\infty$ has this property as well. But there may be more than one eigenfunction associated to $\Lambda_\infty$. 

\begin{Cor}\label{hotspot}
If $\Omega$ is convex and smooth, then any first nontrivial Neumann eigenfunction, i.e. any viscosity solution to (\ref{infn}) for $\Lambda=\Lambda_\infty$ attains both its maximum and minimum only on the boundary $\partial\Omega$. Moreover the extrema of $u$ are located at points that have maximal distance in $\overline \Omega$.
\end{Cor}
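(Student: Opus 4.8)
The plan is to combine the reflection symmetry of \eqref{infn} with the linear cone barrier that already produces \eqref{mainresult} in Theorem~\ref{thm_main}, and then read off the geometry from the equality case. First observe that $u\mapsto -u$ maps solutions of \eqref{infn} to solutions: the sets $\{u>0\}$ and $\{u<0\}$ are interchanged, $\{u=0\}$ is preserved, and since $|\grad(-u)|=|\grad u|$ and $\Delta_\infty(-u)=-\Delta_\infty u$ the three lines of \eqref{infn} are permuted consistently. Replacing $u$ by $-u$ if needed, I may assume $M:=\max_{\overline\Omega}u=\|u\|_\infty$, which is positive because $u$ is nonconstant. Fix a maximum point $x_M$ and a minimum point $x_m$ (both exist since $u$ is continuous on the compact set $\overline\Omega$) and set $m:=\min_{\overline\Omega}u=u(x_m)$.

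The one essential input, imported from the proof of Theorem~\ref{thm_main}, is that for $\Lambda=\Lambda_\infty=2/\diam(\Omega)$ the cone $C(x):=M\bigl(1-\Lambda_\infty|x-x_M|\bigr)$ is a viscosity subsolution of \eqref{infn} on all of $\overline\Omega$. Indeed $C$ is $\infty$--harmonic off its vertex; on $\{C>0\}$ one has $|\grad C|-\Lambda_\infty C=M\Lambda_\infty^2|x-x_M|\ge0$; on $\{C<0\}$ one has $-|\grad C|-\Lambda_\infty C=M\Lambda_\infty\bigl(\Lambda_\infty|x-x_M|-2\bigr)\le0$ precisely because $|x-x_M|\le\diam(\Omega)=2/\Lambda_\infty$; and $\partial_\nu C\le0$ on $\partial\Omega$ because convexity of $\Omega$ forces $(x-x_M)\cdot\nu(x)\ge0$ there, so $C$ respects the generalized Neumann condition from below. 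Since $u$ is a supersolution of \eqref{infn} with $C(x_M)=M=u(x_M)$, the comparison principle behind Theorem~\ref{thm_main} yields $u(x)\ge M\bigl(1-\Lambda_\infty|x-x_M|\bigr)$ on $\overline\Omega$; applying the same to $-u$ gives $u(x)\le m\bigl(1-\Lambda_\infty|x-x_m|\bigr)$ on $\overline\Omega$.

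Evaluating the first bound at $x_m$ and the second at $x_M$ gives $m\ge M\bigl(1-\Lambda_\infty|x_M-x_m|\bigr)$ and $M\le m\bigl(1-\Lambda_\infty|x_M-x_m|\bigr)$. The second forces $m<0$: were $m\ge0$ it would give $M\le m$, forcing $M=m$ and contradicting that $u$ is nonconstant. Writing $m=-|m|$, the two lines become $\Lambda_\infty|x_M-x_m|\ge 1+|m|/M$ and $\Lambda_\infty|x_M-x_m|\ge 1+M/|m|$, hence $\Lambda_\infty|x_M-x_m|\ge 1+\max\{M/|m|,\,|m|/M\}\ge 2$. Therefore $|x_M-x_m|\ge 2/\Lambda_\infty=\diam(\Omega)$, and since always $|x_M-x_m|\le\diam(\Omega)$ we get $|x_M-x_m|=\diam(\Omega)$ (and, incidentally, $|m|=M$). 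A point of $\overline\Omega$ at distance $\diam(\Omega)$ from another point of $\overline\Omega$ cannot be interior — one could push it outward along the connecting segment and exceed the diameter — so $x_M,x_m\in\partial\Omega$. As $x_M$ and $x_m$ were arbitrary maximum and minimum points, every extremum of $u$ lies on $\partial\Omega$ and realizes the diameter.

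The genuinely hard part is the imported fact, not the deduction: one must know that the degenerate operator in \eqref{infn} together with its generalized Neumann condition obeys a comparison principle against the explicit barrier $C$. This is exactly the mechanism producing \eqref{mainresult}, and both hypotheses of Theorem~\ref{thm_main} enter it essentially — convexity, to make $C$ compatible with the Neumann condition on $\partial\Omega$, and the identity $\Lambda_\infty\,\diam(\Omega)=2$, to make $C$ a subsolution across the whole of $\overline\Omega$. In writing the proof I would quote this and keep the rest elementary.
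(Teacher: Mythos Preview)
Your argument hinges on the claim that a comparison principle for the \emph{full Neumann problem} \eqref{infn} gives $u\ge C$ on $\overline\Omega$ once $C$ is a subsolution and $u$ a supersolution. No such principle is established in the paper, and in fact it cannot hold: by the homogeneity of \eqref{infn}, if $u$ is a solution then so is $2u$; hence $2u$ is a subsolution and $u$ a supersolution, yet $2u\le u$ is false on $\{u>0\}$. Comparison for Neumann eigenvalue problems simply fails in this naked form. The phrase ``the comparison principle behind Theorem~\ref{thm_main}'' misreads what the paper actually uses. The mechanism in Proposition~\ref{prop_eigen} is not a Neumann comparison on $\overline\Omega$; it is a comparison of Dirichlet type, carried out only on the positivity set $\Omega_+$, for the simpler operator $\min\{|\nabla\cdot|-1,-\Delta_\infty\cdot\}$, with a barrier $g_{\epsilon,\gamma}(x)=(1+\epsilon)|x-x_0|-\gamma|x-x_0|^2$ centred at a point $x_0\in\{u\le0\}$ (not at the maximum point). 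The values on $\partial\Omega_+\cap\Omega$ provide the anchoring that your global argument lacks, while the portion $\partial\Omega_+\cap\partial\Omega$ is disposed of by checking that the strict inequalities $|\nabla g_{\epsilon,\gamma}|>1$, $-\Delta_\infty g_{\epsilon,\gamma}>0$ and $\partial_\nu g_{\epsilon,\gamma}>0$ violate the viscosity boundary condition if the infimum of $g_{\epsilon,\gamma}-u$ were attained there. Letting $\epsilon,\gamma\to0$ gives the key pointwise inequality \eqref{Dist}: $|x-x_0|\ge u(x)$ for every $x$ with $u(x)\ge0$ and every $x_0$ with $u(x_0)\le0$ (after normalising $\max u=1/\Lambda$).

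The paper then reads off Corollary~\ref{hotspot} in two lines from \eqref{Dist}. With $\overline x$ a maximum point and $\underline x$ a minimum point, \eqref{Dist} and its analogue for $-u$ give $\mathrm{dist}(\overline x,\Omega_-)\ge 1/\Lambda$ and $\mathrm{dist}(\underline x,\Omega_+)\ge 1/\Lambda$, hence $|\overline x-\underline x|\ge 2/\Lambda=\diam(\Omega)$ when $\Lambda=\Lambda_\infty$; equality forces $\overline x,\underline x\in\partial\Omega$ to realise the diameter. Your concluding geometric step is the same, but to reach it you must replace the nonexistent global Neumann comparison by the localised argument on $\Omega_+$ with the cone centred in $\{u\le0\}$.
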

The proof of our main result, Theorem \ref{thm_main}, will be a combination of Theorem \ref{main} in Section \ref{limsec} on the limiting problem as $p\to \infty$ and Proposition \ref{prop_eigen} in Section \ref{concase}. Corollary \ref{hotspot} will be derived at the very end of this paper.

\section{The limiting problem as $p\to \infty$}\label{limsec}

\begin{Def}\label{intrinsic}
Let $\Omega$ be a bounded open connected domain in ${\mathbb R}^n$.  The intrinsic diameter of $\Omega$, denoted by $\diam(\Omega)$, is defined as
\begin{equation}\label{intrdiam}
\textrm{diam}(\Omega):=\sup_{x,y\in \Omega}d_{\Omega}(x,y)
\end{equation}
 whith $d_{\Omega}$ denoting geodetic distance in $\Omega$.
\end{Def}
Consider the eigenvalue problem
\begin{equation}\Lambda_p^p=\min\left\{\frac{\int_{\Omega}|\grad v|^p\ dx}{\int_{\Omega}|v|^p\ dx}:\ v\in W^{1,p}(\Omega),\ \int_{\Omega} |v|^{p-2}v\ dx=0 \right\}.
\label{eigenvalue}
\end{equation}
Let $u_p$ be a  minimizer  of ($\ref{eigenvalue}$) such that $||u_p||_p=1$, where  $||f||^p_p=\frac{1}{|\Omega|}\int_{\Omega} \mid f\mid^p \ dx$.

For every $p>1$ $u_p$ satisfies the Euler equation
\begin{equation}
\label{euler}
\left\{
\begin{array}
{ll}-{\rm div} \Bigl(|\grad u_p|^{p-2}\grad u_p\Bigr)=\Lambda_p^p|u_p|^{p-2}u_p  &\mbox{ in } \Omega \\\\
 |\grad u_p|^{p-2}\frac{\partial u_p}{\partial \nu}=0 & \mbox{ on } \partial\Omega
\end{array}
\right.
\end{equation}
and 
\begin{Lem}
Let $\Omega$ be a connected bounded open set in $\R^n$ with Lipschitz boundary, then 
\begin{equation}
\lim_{p\rightarrow +\infty} \Lambda_p=\Lambda_{\infty}:=\frac{2}{\diam(\Omega)},
\end{equation}
here  $\diam(\Omega)$ denotes the  intrinsic diameter as defined  in (\ref{intrdiam}).
\end{Lem}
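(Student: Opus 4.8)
The plan is to establish the two bounds $\limsup_{p\to\infty}\Lambda_p\le\Lambda_\infty$ and $\liminf_{p\to\infty}\Lambda_p\ge\Lambda_\infty$ separately. The first already shows $\{\Lambda_p\}$ is bounded, so combining the two via a subsequence argument (every subsequence of $\Lambda_p$ has a sub-subsequence converging to $\Lambda_\infty$) yields $\lim_p\Lambda_p=\Lambda_\infty$.

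\emph{Upper bound.} Fix $\e>0$, pick $x_0,y_0\in\Om$ with $d_\Om(x_0,y_0)>\diam(\Om)-\e$, and set $w(x):=d_\Om(x_0,x)$. Since every point of $\Om$ is interior, on every ball contained in $\Om$ the function $w$ is $1$-Lipschitz (there $d_\Om$ is the Euclidean distance), hence $|\grad w|\le1$ a.e.\ and $w\in W^{1,p}(\Om)$, and $0\le w\le M_0:=\sup_\Om w\in(\diam(\Om)-\e,\diam(\Om)]$. For each $p$ the map $c\mapsto\int_\Om|w-c|^{p-2}(w-c)\,dx$ is continuous, strictly decreasing, positive at $c=0$ and negative at $c=M_0$, so it vanishes at a unique $c_p\in(0,M_0)$; then $v_p:=w-c_p$ is admissible in \eqref{eigenvalue} and $\Lambda_p^p\le\int_\Om|\grad w|^p\,dx\big/\int_\Om|v_p|^p\,dx\le|\Om|\big/\int_\Om|v_p|^p\,dx$. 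The crucial claim is $c_p\to M_0/2$: if $c_p\to c^*<M_0/2$ along a subsequence, choose $\eta>0$ small; then for large $p$ one has $w-c_p>M_0/2+\eta$ on $\{w>M_0-\eta\}$ (a set of positive measure since $w$ is continuous with $\sup_\Om w=M_0$), while $|w-c_p|<M_0/2-2\eta$ on $\{w<c_p\}$, so comparing positive and negative parts forces $\int_\Om|w-c_p|^{p-2}(w-c_p)\,dx>0$, a contradiction; the case $c^*>M_0/2$ is symmetric, using $\{w<\eta\}$. Taking a fixed ball $B_\delta(x_0)\subset\Om$ we get $|v_p|\ge c_p-\delta$ there, whence $\int_\Om|v_p|^p\ge(c_p-\delta)^p|B_\delta(x_0)|$ and $\Lambda_p\le(|\Om|/|B_\delta(x_0)|)^{1/p}(c_p-\delta)^{-1}$. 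Letting $p\to\infty$, then $\delta\to0$ and $\e\to0$, gives $\limsup_p\Lambda_p\le2/\diam(\Om)=\Lambda_\infty$.

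\emph{Lower bound.} Let $\Lambda:=\liminf_p\Lambda_p$ (finite by the above) and pass to a subsequence $\Lambda_{p_j}\to\Lambda$. For fixed $q>n$, the normalization gives $\|u_{p_j}\|_q\le\|u_{p_j}\|_{p_j}=1$ and $\|\grad u_{p_j}\|_q\le\|\grad u_{p_j}\|_{p_j}=\Lambda_{p_j}$, so $\{u_{p_j}\}$ is bounded in $W^{1,q}(\Om)$; by the compact embedding $W^{1,q}(\Om)\hookrightarrow C^{0,1-n/q}(\overline\Om)$ (valid on Lipschitz domains) and a diagonal procedure, we may assume $u_{p_j}\to u_\infty$ uniformly on $\overline\Om$, with $u_\infty\in W^{1,\infty}(\Om)$ and, by weak lower semicontinuity followed by $q\to\infty$, $|\grad u_\infty|\le\Lambda$ a.e. Passing to the limit in $\|u_{p_j}\|_{p_j}=1$ (using $\|u_{p_j}\|_q\le\|u_{p_j}\|_{p_j}\le\|u_{p_j}\|_\infty$ and uniform convergence) gives $\|u_\infty\|_\infty=1$. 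The constraint $\int_\Om|u_{p_j}|^{p_j-2}u_{p_j}\,dx=0$ reads $\|u_{p_j}^{+}\|_{L^{p_j-1}(\Om)}=\|u_{p_j}^{-}\|_{L^{p_j-1}(\Om)}$; since $u_{p_j}^{\pm}\to u_\infty^{\pm}$ uniformly, the two sides converge to $\max_{\overline\Om}u_\infty^{+}$ and $\max_{\overline\Om}u_\infty^{-}$, which must then be equal, and combined with $\|u_\infty\|_\infty=1$ this forces $\max_{\overline\Om}u_\infty=1$ and $\min_{\overline\Om}u_\infty=-1$, so the oscillation of $u_\infty$ on $\overline\Om$ equals $2$ (in particular $u_\infty$ is nonconstant). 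On the other hand $|\grad u_\infty|\le\Lambda$ a.e.\ makes $u_\infty$ $\Lambda$-Lipschitz for the geodesic distance $d_\Om$, hence its oscillation is at most $\Lambda\,\diam(\Om)$. Therefore $2\le\Lambda\,\diam(\Om)$, i.e.\ $\Lambda\ge\Lambda_\infty$, and with the upper bound we conclude $\lim_p\Lambda_p=\Lambda_\infty$.

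\emph{Main obstacle.} None of the steps is deep, but care is needed wherever the \emph{intrinsic} distance enters. One must check that $d_\Om(x_0,\cdot)\in W^{1,p}(\Om)$ with $|\grad d_\Om(x_0,\cdot)|\le1$ a.e.\ (a local statement, reducing to the Euclidean fact on small balls inside $\Om$) and, dually, that the a.e.\ bound $|\grad u_\infty|\le\Lambda$ upgrades to $\Lambda$-Lipschitz continuity of $u_\infty$ with respect to $d_\Om$, via a chaining argument along rectifiable paths in $\Om$. The only other slightly delicate point is the convergence $c_p\to M_0/2$ of the normalizing shift: it says that in the limit $p\to\infty$ the $L^p$-mean-zero constraint selects the midrange value of $w$, and this is precisely what produces the factor $2$ in $\Lambda_\infty=2/\diam(\Om)$.
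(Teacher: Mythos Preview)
Your proof is correct and follows essentially the same two-step strategy as the paper: the same test function $d_\Omega(x_0,\cdot)-c_p$ for the upper bound, and the same compactness/limit argument with the mean-zero constraint yielding $\sup u_\infty=-\inf u_\infty$ for the lower bound. The one minor divergence is in how you finish the upper bound: you prove $c_p\to M_0/2$ and bound $\int_\Omega|v_p|^p$ from below on a small ball near $x_0$, whereas the paper simply passes to a subsequential limit $c_p\to c$ and uses that $\|\,d_\Omega(\cdot,x_0)-c_p\|_p\to\sup_{x}|d_\Omega(x,x_0)-c|=\max(c,M_0-c)\ge M_0/2$, which is a shorter route to the same conclusion and does not require identifying the limiting shift.
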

\begin{proof}
{{\bf Step 1} $\limsup_{p\rightarrow \infty} \Lambda_p \le \dfrac{2}{\diam(\Omega)}$ .}\\ We start proving that $\Lambda_{\infty}\leq 2/\diam(\Omega)$.
Let $x_0\in \Omega$. We choose $c_p\in \mathbb R$ such that $w(x)=d_{\Omega}(x,x_0)-c_p$ is a good test function in ($\ref{eigenvalue}$), that is $$\int_{\Omega} |w|^{p-2}w\ dx=0.$$ Using this test function in ($\ref{eigenvalue}$) we get (recalling that $| \grad  d_{\Omega}(x,x_0)| \le 1$ a.e. in $\Omega$)
\begin{equation}
\label{leql}
\Lambda_p\leq \frac{1}{\Bigl(\frac{1}{|\Omega|}\int_{\Omega}|d_{\Omega}(x,x_0)-c_p|^p\Bigr)^{1/p}}.
\end{equation}
Now we observe that $0 \leq c_p\leq \diam(\Omega)$ and thus up to a subsequence $c_p\rightarrow c$, with $0 \leq c\leq \diam(\Omega)$, then we obtain
$$
\liminf_{p\rightarrow\infty}\Bigl(\frac{1}{|\Omega|}\int_{\Omega}|d(x,x_0)-c_p|^p\Bigr)^{1/p}=\sup_{x\in \Omega}|d_{\Omega}(x,x_0)-c|\geq \diam(\Omega)/2
$$
and then from ($\ref{leql}$) the Step $1$ is proved.\\
{{\bf Step 2} {$\liminf_{p\rightarrow \infty} \Lambda_p \ge \dfrac{2}{\diam(\Omega)}$ .}}\\
By definition we get
$$
\Bigl(\frac{1}{|\Omega|}\int_{\Omega}|\grad u_p(x)|^p dx\Bigr)^{1/p} = \Lambda_p.
$$
Let us fix $m>n$. For $p>m$ by H\"older inequality we have
$$
\Bigl(\frac{1}{|\Omega|}\int_{\Omega}|\grad u_p(x)|^m dx\Bigr)^{1/m}\leq \Lambda_p.
$$
We can deduce that $\{u_p\}_{p\geq m}$ is uniformly bounded in $W^{1,m}(\Omega)$ and then assume that, up to a subsequence, $u_{p}$ converges weakly in $W^{1,m}(\Omega)$ and  in $C^0(\Omega)$ to a function $u_{\infty} \in W^{1,m}(\Omega)$. For $q>m$, by semicontinuity and H\"older inequality, we get 
$$
\frac{||\grad u_{\infty}||_q}{||u_{\infty}||_q}\leq \liminf_{p\rightarrow \infty} \frac{\Bigl(\frac{1}{|\Omega|}\int_{\Omega}|\grad u_p(x)|^q dx\Bigr)^{1/q}}{\Bigl(\frac{1}{|\Omega|}\int_{\Omega}|u_p(x)|^q dx\Bigr)^{1/q}}\leq\liminf_{p\rightarrow \infty} \frac{\Bigl(\frac{1}{|\Omega|}\int_{\Omega}|\grad u_p(x)|^p dx\Bigr)^{1/p}}{\Bigl(\frac{1}{|\Omega|}\int_{\Omega}|u_p(x)|^q dx\Bigr)^{1/q}}.
$$
Thus
\begin{equation}
\label{normaq}
\frac{||\grad u_{\infty}||_q}{||u_{\infty}||_q} \leq {\frac{||u_{\infty}||_{\infty}} {||u_{\infty}||_q}} \liminf_{p\rightarrow \infty} \Lambda_p
\end{equation}
letting $q \rightarrow \infty$ we get
\begin{equation}
\label{norm}
\frac{||\grad u_{\infty}||_{\infty}}{||u_{\infty}||_{\infty}} \leq  \liminf_{p\rightarrow \infty} \Lambda_p.
\end{equation}

Now we observe that condition $\int_{\Omega}|u_p|^{p-2}u_p=0$ leads to
\begin{equation}
\label{meaninf}
\sup u_{\infty} =- \inf u_{\infty},
\end{equation}
infact we have
\begin{equation}
\begin{array}{ll}
0 &\leq \Big| \|(u_{\infty})^{+}\|_{p-1} - \|(u_{\infty})^{-}\|_{p-1} \Big| \\
\cr
&= \Big| \|(u_{\infty})^{+}\|_{p-1}  - \|(u_{p})^+\|_{p-1} + \|(u_{p})^-\|_{p-1} - \|(u_{\infty})^{-}\|_{p-1} \Big| 
\\
\cr
&\le
\Big| \|(u_{\infty})^+\|_{p-1} - \|(u_{p})^+\|_{p-1} \Big| + \Big| \|(u_{\infty})^-\|_{p-1} - \|(u_{p})^-\|_{p-1} \Big|\\
\cr
&\leq \| (u_{\infty})^+ -  (u_{p})^+ \|_{p-1} + \| (u_{\infty})^- -  (u_{p})^- \|_{p-1}.
\end{array}
\end{equation}
Letting $p \rightarrow \infty$ we obtain \eqref{meaninf}.
Using the following inequality (see for instance \cite{B}, p.269)
$$
|u_{\infty}(x) - u_{\infty}(y)| \leq d_{\Omega}(x,y) ||\grad u_{\infty}||_{\infty} \le \diam{(\Omega)} ||\grad u_{\infty}||_{\infty},
$$
  we can conclude the proof  by \eqref{norm}
observing that
$$
2||u||_{\infty} = \sup u_{\infty} - \inf u_{\infty} \leq \diam(\Omega) ||\grad u_{\infty}||_{\infty}.
$$
\end{proof}
\begin{Rem}
Our proof shows that $u_\infty$ increases with constant slope $\Lambda_\infty||u_\infty||_\infty$ along the geodesic between two point spanning $diam(\Omega)$. In a rectangle this would be a diagonal.
\end{Rem}

Before proving Theorem \ref{main} we recall the definition of viscosity super (sub) solution to
\begin{equation}
\label{Neu}
\begin{dcases}
F(u, \grad u, \grad^2u)={\rm {min}}  \{| \grad u| - \Lambda |u|, -\Delta_{\infty} u\} = 0 & \mbox{in}\ \{u >0\}\cap\Omega\\
G(u,\grad u, \grad^2u)={\rm {max}} \{  \Lambda |u| -| \grad u|, -\Delta_{\infty} u\}=0 & \mbox{in}\ \{u <0\}\cap\Omega\\
H(\grad^2 u) =-\Delta_{\infty} u= 0,& \mbox{in} \ \{u =0\}\cap\Omega\\
\frac{\partial u}{\partial \nu}=0 & \mbox{on} \ \partial\Omega.\\
\end{dcases}
\end{equation}

\begin{Def}
\label{subs}
An upper semicontinuous  function $u$ is a viscosity subsolution to \eqref{Neu} if whenever $x_0 \in \Omega$ and $\phi \in C^2(\Omega)$ are such that
\[
u(x_0) = \phi(x_0), \quad {\mbox and} \>\> u(x)<\phi(x)  \>\>  {\mbox if \>}  x\neq x_0, \quad {\mbox then}
\] 
 \begin{equation}
\label{func}
F(\phi(x_0), \grad\phi(x_0), \grad^2\phi(x_0)) \le 0 \quad {\mbox if} \> u(x_0)>0
\end{equation}
\begin{equation}
\label{func2}
G(\phi(x_0), \grad\phi(x_0), \grad^2\phi(x_0)) \le 0 \quad {\mbox if} \> u(x_0)<0
\end{equation}
\begin{equation}
\label{func3}
H(\grad^2\phi(x_0)) \le 0 \quad {\mbox if} \> u(x_0)=0,
\end{equation}
while if $x_0 \in \partial \Omega$  and $\phi \in C^2(\bar \Omega)$  are such that 
\[
u(x_0) = \phi(x_0), \quad {\mbox and} \>\> u(x)<\phi(x)  \>\>  {\mbox if \>}  x\neq x_0, \quad {\mbox then}
\]
\begin{equation}
\label{BC}
{\rm {min}} \{F(\phi(x_0), \grad\phi(x_0), \grad^2\phi(x_0)), \frac{\partial \phi}{\partial{\nu}}(x_0) \} \le 0  \quad {\mbox if} \> u(x_0)>0
\end{equation}
\begin{equation}
\label{BC2}
{\rm {min}} \{G(\phi(x_0), \grad\phi(x_0), \grad^2\phi(x_0)), \frac{\partial \phi}{\partial{\nu}}(x_0) \} \le 0  \quad {\mbox if} \> u(x_0)<0
\end{equation}
\begin{equation}
\label{BC3}
{\rm {min}} \{H(\grad^2\phi(x_0)), \frac{\partial \phi}{\partial{\nu}}(x_0) \} \le 0  \quad {\mbox if} \> u(x_0)=0.
\end{equation}
\end{Def}

\begin{Def}
\label{supers}
A lower semicontinuous function $u$ is a viscosity supersolution to \eqref{Neu} if whenever $x_0 \in \Omega$ and $\phi \in C^2(\Omega)$ are such that
\[
u(x_0) = \phi(x_0), \quad {\mbox and} \>\> u(x)>\phi(x)  \>\>  {\mbox if \>}  x\neq x_0, \quad {\mbox then}
\] 
 \begin{equation}
\label{dfunc}
F(\phi(x_0), \grad\phi(x_0), \grad^2\phi(x_0)) \ge 0 \quad {\mbox if} \> u(x_0)>0
\end{equation}
\begin{equation}
\label{dfunc2}
G(\phi(x_0), \grad\phi(x_0), \grad^2\phi(x_0)) \ge 0 \quad {\mbox if} \> u(x_0)<0
\end{equation}
\begin{equation}
\label{dfunc3}
H(\grad^2\phi(x_0)) \ge 0 \quad {\mbox if} \> u(x_0)=0,
\end{equation}

\noindent
while if $x_0 \in \partial \Omega$  and $\phi \in C^2(\bar \Omega)$  are such that 
\[
u(x_0) = \phi(x_0), \quad {\mbox and} \>\> u(x)>\phi(x)  \>\>  {\mbox if \>}  x\neq x_0, \quad {\mbox then}
\]
then
\begin{equation}
\label{DBC}
{\rm {max}} \{F(\phi(x_0), \grad\phi(x_0), \grad^2\phi(x_0)), \frac{\partial \phi}{\partial{\nu}}(x_0) \} \ge 0  \quad {\mbox if} \> u(x_0)>0
\end{equation}
\begin{equation}
\label{DBC2}
{\rm {max}} \{G(\phi(x_0), \grad\phi(x_0), \grad^2\phi(x_0)), \frac{\partial \phi}{\partial{\nu}}(x_0) \} \ge 0  \quad {\mbox if} \> u(x_0)<0
\end{equation}
\begin{equation}
\label{DBC3}
{\rm {max}} \{H(\grad^2\phi(x_0)), \frac{\partial \phi}{\partial{\nu}}(x_0) \} \ge 0  \quad {\mbox if} \> u(x_0)=0.
\end{equation}
\end{Def}

\begin{Def}
\label{sol}
A continuous function $u$ is a solution to \eqref{Neu} iff it is both a supersolution and a subsolution to \eqref{Neu}
\end{Def}

\begin{Rem}
It is instructive to use the definition for checking that the one-dimensional function $u(x)=x_1$ on the square $\Omega=(-1,1)\times(-1,1)$ is a viscosity solution of (\ref{Neu}). In fact,
$u\in C^2(\Omega)$, and $-\Delta_\infty u=0$ in $\Omega$. 

So the first PDE in (\ref{Neu}) is satisfied  if also $1=|\nabla u|\geq\Lambda u$ on $\{ u>0\}$, and that implies $\Lambda\leq 1.$

The Neumann boundary condition is satisfied in classical sense on horizontal parts of $\partial\Omega$. However, for Neumann condition to hold in the viscosity sense on the right part,  
we must verify
$$\min \{ \min \{| \nabla \phi|-\Lambda \phi, -\Delta_\infty\phi\}\ , \ \partial\phi/\partial \nu\}(x_0)\leq 0$$
for any $C^2$ test function $\phi$ touching $u$ in $x_0\in\partial\Omega$ from above, and
$$\max\{ \min \{|\nabla \psi|-\Lambda \psi, -\Delta_\infty\psi\}\ , \ \partial\psi/\partial \nu\}(x_0)\geq 0$$
for any smooth test function $\psi$ touching $u$ from below. 

Recall $|\nabla u|=\partial u/\partial\nu=1$ everywhere. Therefore only the very first constraint is active on the boundary and implies $$\Lambda\geq 1.$$
This shows that $u(x)=x_1$ is a viscosity solution to (\ref{Neu}) with eigenvalue $\Lambda=1$, but $$\Lambda=1>\frac{1}{\sqrt{2}}=\frac{2}{diam(\Omega)}=\Lambda_\infty.$$

\end{Rem}

In what follows we will use the notation
\[
F_p(u, \grad u, \grad^2 u) = -(p-2) |\grad u|^{p-4} \Delta_{\infty} u - |\grad u|^{p-2} \Delta u - \Lambda_p^p |u|^{p-2}u
\]
with
\[ 
\Delta_{\infty} u =\sum_{i,j=1}^n u_{x_i}u_{x_ix_j}u_{x_j}.
\]

\begin{Lem}\label{weakvisc}
Let $u \in W^{1,p}(\Omega)$ be a weak solution to 
\begin{equation}
\left\{
\begin{array}
{ll}-{\rm div} \Bigl(|\grad u|^{p-2}\grad u\Bigr)=\Lambda_p^p|u|^{p-2}u  &\mbox{ in } \Omega \\\\
 |\grad u|^{p-2}\frac{\partial u}{\partial \nu}=0 & \mbox{ on } \partial\Omega
\end{array}
\right.
\end{equation}
then $u$ is a viscosity solution to
\begin{equation}
\label{plapvisc}
\left\{
\begin{array}
{ll} F_p(u,\grad u,\grad^2u) =0 &\mbox{ in } \Omega \\\\
 |\grad u|^{p-2}\frac{\partial u}{\partial \nu}=0 & \mbox{ on } \partial\Omega.
\end{array}
\right.
\end{equation}
\end{Lem}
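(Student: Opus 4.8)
\emph{Plan of proof.} Since $u$ is a weak solution of $-\Delta_p u=\Lambda_p^p|u|^{p-2}u$, it is in particular both a weak sub- and a weak supersolution, and the verifications that it is a viscosity sub- and supersolution of \eqref{plapvisc} are mirror images of one another (test functions touching from above versus from below, with reversed inequalities); so it suffices to explain, say, the supersolution half. Write $f(x):=\Lambda_p^p|u(x)|^{p-2}u(x)$, so that $u$ is a weak solution of $-\Delta_p w=f$ and, for smooth $\phi$, $F_p(\phi,\grad\phi,\grad^2\phi)=-\mathrm{div}\bigl(|\grad\phi|^{p-2}\grad\phi\bigr)-\Lambda_p^p|\phi|^{p-2}\phi$. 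We may assume $p>2$, the only range relevant in the sequel; then $F_p$ extends to a continuous function of $(r,q,X)$, with $F_p(r,0,X)=-\Lambda_p^p|r|^{p-2}r$, because $|q|^{p-4}q^{T}Xq=O(|q|^{p-2})\to0$ as $q\to0$. The two engines are the comparison principle for $-\Delta_p w=f$ and the monotonicity inequality $\langle|a|^{p-2}a-|b|^{p-2}b,\,a-b\rangle\ge0$.

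The first case is an interior point $x_0$ at which $\phi\in C^2$ touches $u$ from below \emph{with} $\grad\phi(x_0)\ne0$. This is the classical argument: if $F_p(\phi,\grad\phi,\grad^2\phi)(x_0)<0$, then (using $\phi(x_0)=u(x_0)$ and continuity) $\phi$ is a strict classical subsolution of $-\Delta_p w=f$ on a small ball $B_r(x_0)\subset\Omega$ on which $\grad\phi\ne0$; testing the weak equation for $u$ against $\eta:=(\phi+\tfrac m2-u)^{+}$, where $m:=\min_{\partial B_r(x_0)}(u-\phi)>0$ — a nonnegative function in $W^{1,p}_0(B_r(x_0))$ that is positive near $x_0$ — integrating by parts the $\phi$-side, and subtracting, one reaches $\int_{\{\phi+m/2>u\}}\langle|\grad\phi|^{p-2}\grad\phi-|\grad u|^{p-2}\grad u,\,\grad\phi-\grad u\rangle\,dx<0$, contradicting monotonicity. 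The subsolution variant uses $\eta:=(u-\phi+\tfrac m2)^{+}$ and a strict supersolution.

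The genuinely delicate case — the one I expect to be the main obstacle — is an interior point with $\grad\phi(x_0)=0$, since there the $p$-Laplacian is degenerate and the argument above has no traction. Here $F_p(\phi,0,\grad^2\phi)(x_0)=-\Lambda_p^p|u(x_0)|^{p-2}u(x_0)$, which already has the sign required of a supersolution when $u(x_0)\le0$; so it remains to rule out, when $u(x_0)>0$ (hence $f>0$ near $x_0$), that any $C^2$ function with vanishing gradient touches $u$ from below at $x_0$. From $\grad\phi(x_0)=0$ and Taylor's formula, $u(x)\ge\phi(x)\ge u(x_0)-C|x-x_0|^{2}$ near $x_0$. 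I would then insert the radial barrier $v_\rho(x):=u(x_0)-C\rho^{2}+A\bigl(\rho^{\,p/(p-1)}-|x-x_0|^{\,p/(p-1)}\bigr)$, whose $p$-Laplacian is the explicit constant $-\Delta_p v_\rho=A^{p-1}n\bigl(p/(p-1)\bigr)^{p-1}$: taking $A$ small makes $v_\rho$ a weak subsolution of $-\Delta_p w=f$ on $B_\rho(x_0)$, while $v_\rho\le u$ on $\partial B_\rho(x_0)$ by the quadratic lower bound; comparison then forces $v_\rho\le u$ in $B_\rho(x_0)$, i.e. $A\rho^{\,p/(p-1)}\le C\rho^{2}$ at $x_0$. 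Since $p>2$ gives $2-p/(p-1)>0$, this fails for $\rho$ small. The subsolution side is symmetric, with barrier $u(x_0)+C\rho^{2}+A|x-x_0|^{\,p/(p-1)}$, and is vacuous unless $u(x_0)<0$.

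Finally, at a boundary point $x_0\in\partial\Omega$ with $\phi\in C^2(\overline\Omega)$ touching $u$ from below: if $\partial\phi/\partial\nu(x_0)=0$ — in particular whenever $\grad\phi(x_0)=0$ — then $|\grad\phi(x_0)|^{p-2}\,\partial\phi/\partial\nu(x_0)=0$ and the required inequality $\max\{F_p(\phi,\grad\phi,\grad^2\phi)(x_0),\,|\grad\phi(x_0)|^{p-2}\partial\phi/\partial\nu(x_0)\}\ge0$ holds for free; so one may assume $\partial\phi/\partial\nu(x_0)\ne0$, hence $\grad\phi(x_0)\ne0$, which returns us to the non-degenerate regime. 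If the inequality failed we would have both $F_p(\phi,\dots)(x_0)<0$ and $\partial\phi/\partial\nu(x_0)<0$, so $\phi$ is a strict subsolution on $B_r(x_0)\cap\Omega$ with $\partial\phi/\partial\nu<0$ on $\partial\Omega\cap B_r(x_0)$; repeating the comparison argument with $\eta:=(\phi+\tfrac m2-u)^{+}$ now supported in $B_r(x_0)\cap\Omega$ and allowed to be nonzero on $\partial\Omega$ — which is exactly where the natural Neumann condition satisfied by the weak solution $u$ enters — produces, after integration by parts, the extra boundary term $\int_{\partial\Omega\cap B_r}|\grad\phi|^{p-2}\,(\partial\phi/\partial\nu)\,\eta\,d\sigma\le0$, of the right sign, and the contradiction goes through exactly as before. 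Carrying out these three verifications for sub- and supersolutions alike shows that $u$ is a viscosity solution of \eqref{plapvisc}.
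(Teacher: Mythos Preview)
Your argument is correct and, for the boundary part, essentially identical to the paper's: both assume by contradiction that the boundary viscosity inequality fails, pass to a small neighbourhood where the strict inequalities persist, test against $(\phi+\tfrac{m}{2}-u)^{+}$, and reach a contradiction via the monotonicity of $a\mapsto|a|^{p-2}a$; the Neumann boundary term has the favourable sign exactly as you describe.

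The only real difference is scope. The paper disposes of the interior part in one line by citing \cite{JLM}, Lemma~1.8, and then focuses exclusively on the Neumann boundary verification. You instead give a self-contained treatment of the interior case, including the degenerate situation $\grad\phi(x_0)=0$, which you handle with the radial barrier $v_\rho(x)=u(x_0)-C\rho^{2}+A\bigl(\rho^{p/(p-1)}-|x-x_0|^{p/(p-1)}\bigr)$ and the exponent comparison $2>p/(p-1)$ for $p>2$. This barrier argument is sound (the barrier is in $W^{1,\infty}$, its $p$-Laplacian is the constant you state, comparison applies, and $A$ can be fixed independently of small $\rho$ once $f(x_0)>0$). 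A minor stylistic remark: in the non-degenerate interior case you compare against $f=\Lambda_p^p|u|^{p-2}u$ after using $\phi(x_0)=u(x_0)$, whereas the paper keeps $\Lambda_p^p|\phi|^{p-2}\phi$ on the right and exploits $\phi\le u$ to make that term nonpositive after subtraction; both routes lead to the same monotonicity contradiction.
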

\begin{proof}
That $u$ is a viscosity solution to the differential equation $F_p=0$ in $\Omega$ was shown in \cite{JLM}, Lemma 1.8. It remains to show that the Neumann boundary condition is satisfied in the viscosity sense as defined for instance in    
\cite{GMPR}.
Let $x_0 \in \partial \Omega$, $\phi \in C^2(\bar \Omega)$ such that $u(x_0)= \phi(x_0)$ and $\phi(x) < u(x)$ when $x \neq x_0$. Assume by contradiction that
\begin{equation}
\label{plaplbd}
\max \{|\grad\phi (x_0)|^{p-2}  \frac{\partial \phi}{\partial \nu}(x_0), F_p( \phi(x_0), \grad \phi (x_0), \grad^2 \phi (x_0))  \} <0.
\end{equation}
Then there exists a ball $B_r(x_0)$, centered at $x_0$ with radius $r>0$, such that \eqref{plaplbd} holds true $\forall x \in \bar \Omega \cap B(x_0,r)$. Denoted by
$0 <m = \inf_{\bar \Omega \cap B_r(x_0)} (u(x)-\phi(x)) $ and by $\psi(x) = \phi(x) + \dfrac{m}{2}$.
Using $(\psi -u)^+$ as test function in the weak formulation we have both
\[
\int_{\psi >u} |\grad \psi|^{p-2}\grad \psi \grad(\psi -u) \,dx < \Lambda_p^p \int_{\psi >u}  |\phi|^{p-2}\phi (\psi -u) \, dx
\]
and
\[
\int_{\psi >u} |\grad u|^{p-2}\grad u \grad(\psi -u) \,dx = \Lambda_p^p \int_{\psi >u}  |u|^{p-2}u(\psi -u) \, dx\ .
\]
Subtraction yields the contradiction
\begin{equation}
\begin{array}{rl}
 C \int_{\psi >u} |\grad (\psi -u)|^p \,dx & \leq \int_{\psi >u} \left(|\grad \psi|^{p-2}\grad \psi - |\grad u|^{p-2}\grad u, \grad(\psi -u)\right) \, dx  \\
\cr
&<\Lambda_p^p \int_{\psi >u}  (|\phi|^{p-2}\phi -  |u|^{p-2}u)(\psi -u) \, dx <0.
\end{array}
\end{equation}
\end{proof}

\begin{Thm}\label{main}
Let $\Omega$ be an open bounded connected set of $\R^n$. If $u_\infty$ and $\Lambda_\infty$ are defined as above then $u_\infty$ satisfies \eqref{Neu} in the viscosity sense 
with $\Lambda=\Lambda_\infty$.
\end{Thm}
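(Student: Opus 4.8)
The plan is to pass to the viscosity limit in the equation $F_p(u_p,\grad u_p,\grad^2 u_p)=0$, exploiting the uniform convergence $u_p\to u_\infty$ (in $C^0$) and $\Lambda_p\to\Lambda_\infty$ already established, together with Lemma \ref{weakvisc}, which tells us that each $u_p$ is a viscosity solution of the $p$-Laplacian problem including the Neumann condition. The standard machinery here is the stability of viscosity solutions under locally uniform convergence, as developed by Juutinen--Lindqvist--Manfredi \cite{JLM} in the Dirichlet case; the only genuinely new ingredient is handling the Neumann boundary condition in the viscosity sense, for which Lemma \ref{weakvisc} has prepared the ground.

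I would organize the proof into the supersolution and subsolution parts, treating interior and boundary points separately, and within each into the three regimes $u_\infty(x_0)>0$, $u_\infty(x_0)<0$, $u_\infty(x_0)=0$. Take first an interior point $x_0\in\Omega$ with $u_\infty(x_0)>0$ and a test function $\phi\in C^2(\Omega)$ touching $u_\infty$ strictly from below at $x_0$ (for the supersolution case). By uniform convergence, $\phi$ (after a small downward vertical shift $\phi-\e_p$, $\e_p\to0$) is touched from below by $u_p$ at some point $x_p\to x_0$, with $\grad\phi(x_p)\to\grad\phi(x_0)$ and $\grad^2\phi(x_p)\to\grad^2\phi(x_0)$ and $u_p(x_p)\to u_\infty(x_0)>0$. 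From $F_p(\phi,\grad\phi,\grad^2\phi)(x_p)\ge0$, i.e.
\[
-(p-2)|\grad\phi|^{p-4}\Delta_\infty\phi - |\grad\phi|^{p-2}\Delta\phi - \Lambda_p^p|u_p(x_p)|^{p-2}u_p(x_p)\ge0,
\]
I would divide by $(p-2)|\grad\phi(x_p)|^{p-4}$ (first disposing of the degenerate case $\grad\phi(x_0)=0$, where one shows $-\Delta_\infty\phi(x_0)\ge0$ directly, and also noting that if $|\grad\phi(x_0)|<\Lambda_\infty u_\infty(x_0)$ one derives a contradiction by comparing the growth rates of $|\grad\phi|^{p-2}$ and $\Lambda_p^p|u_p|^{p-1}$, so that in the limit $|\grad\phi(x_0)|\ge\Lambda_\infty u_\infty(x_0)$) and let $p\to\infty$ to obtain $-\Delta_\infty\phi(x_0)\ge0$; combining the two one-sided inequalities gives $\min\{|\grad\phi(x_0)|-\Lambda_\infty u_\infty(x_0),-\Delta_\infty\phi(x_0)\}\ge0$, which is exactly \eqref{dfunc}. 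The cases $u_\infty(x_0)<0$ and $u_\infty(x_0)=0$ are analogous (for the nodal case the zeroth-order term drops out in the limit since $\Lambda_p^p|u_p(x_p)|^{p-2}u_p(x_p)\to0$ when $u_p(x_p)\to0$, leaving $-\Delta_\infty\phi(x_0)\ge0$). The subsolution case is symmetric, touching from above and reversing the inequalities; here I would also need the elementary algebraic fact that $\min\{a,b\}\le0$ follows once one shows that either $a\le0$ or, when $a>0$, necessarily $b\le0$.

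For boundary points $x_0\in\partial\Omega$ the argument is the same but starts from the viscosity Neumann condition in Lemma \ref{weakvisc}: for a test function $\phi\in C^2(\bar\Omega)$ touching $u_\infty$ from below at $x_0$, the shifted functions touch $u_p$ from below at points $x_p\to x_0$; if infinitely many $x_p$ lie in the interior we are back in the interior case and get $F(\phi(x_0),\grad\phi(x_0),\grad^2\phi(x_0))\ge0$, hence \eqref{DBC}; if infinitely many $x_p$ lie on $\partial\Omega$ then the $p$-viscosity Neumann inequality gives $\max\{|\grad\phi(x_p)|^{p-2}\partial_\nu\phi(x_p),F_p(\dots)(x_p)\}\ge0$, and passing to the limit yields $\max\{\partial_\nu\phi(x_0),F(\phi(x_0),\grad\phi(x_0),\grad^2\phi(x_0))\}\ge0$ (the factor $|\grad\phi(x_p)|^{p-2}$ being positive does not affect the sign, after again separating the degenerate gradient case), which is \eqref{DBC}. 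The three sign regimes at the boundary are handled as in the interior. The main obstacle I anticipate is the bookkeeping of the degenerate cases $\grad\phi(x_0)=0$ and the competition between the terms $|\grad\phi|^{p-2}$ and $\Lambda_p^p|u_p|^{p-1}$ as $p\to\infty$ — this is where one must be careful to extract the correct first-order term $|\grad u|-\Lambda u$ rather than losing it, and it is precisely the step that distinguishes the eigenvalue problem from the ordinary $\infty$-Laplace equation; the Neumann boundary analysis, by contrast, is routine once Lemma \ref{weakvisc} is in hand.
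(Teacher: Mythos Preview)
Your proposal is correct and follows essentially the same route as the paper's own proof: both arguments rely on the uniform convergence $u_{p_i}\to u_\infty$ together with Lemma~\ref{weakvisc}, pick approximate touching points $x_i\to x_0$, divide the viscosity inequality for $F_{p_i}$ by $(p_i-2)|\grad\phi(x_i)|^{p_i-4}$, and split into the three sign regimes $u_\infty(x_0)>0$, $<0$, $=0$; at the boundary both distinguish whether infinitely many $x_i$ lie in $\Omega$ (reducing to the interior case) or on $\partial\Omega$ (where the viscosity Neumann condition from Lemma~\ref{weakvisc} gives $\partial_\nu\phi(x_i)\ge 0$ along a subsequence). The only cosmetic difference is that the paper shifts the test function by $-\phi(x_i)+u_{p_i}(x_i)$ rather than by a generic $\e_p$, and it observes directly that $|\grad\phi(x_i)|>0$ is forced by \eqref{F_p} when $u_\infty(x_0)>0$, rather than treating $\grad\phi(x_0)=0$ as a separate degenerate case.
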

%
%
%
%
%
%
%
%

\begin{proof} 
First we observe that in fact there exists a subsequence $u_{p_i}$  uniformly converging to $u_{\infty}$ in $\Omega$.
Now let us prove that $u_{\infty}$ is a viscosity super solution to \eqref{Neu} in $\Omega$.
Let $x_0 \in \Omega$ and let $\phi \in C^2(\Omega)$ be such that $ \phi (x_0)= u_{\infty}(x_0)$ and $ \phi (x)< u_{\infty}(x) \quad x \in \Omega \setminus \{x_0\}$.
Since $u_{p_i} \rightarrow u_\infty$ uniformly in $B_r(x_0)$ one can prove  that $u_{p_i} -\phi$ has a local minimum in $x_i$, with $\lim_i x_i =x_0$.
Recalling that $u_{p_i}$ is a viscosity solution to \eqref{plapvisc}, choosing  $\psi(x) = \phi(x)- \phi(x_i)+u_{p_i}(x_i)$ as test function we obtain
\begin{equation}
\label{F_p}
-[(p_i-2) |\grad\phi(x_i)|^{p_i-4} \Delta_{\infty} \phi(x_i)+ |\grad\phi(x_i)|^{p_i-2} \Delta \phi(x_i)] \ge \Lambda_{p_i}^{p_i }|u_{p_i}(x_i)|^{p_i-2}u_{p_i}(x_i).
\end{equation}
Three cases can occur.
\begin{itemize}
\item $u_{\infty}(x_0)>0.$ In this case \eqref{F_p} implies that $|\grad\phi(x_i)|>0$, hence dividing \eqref{F_p} by $|\grad\phi(x_i)|^{p_i-4}(p_i-2)$ we have 
\begin{equation}
\label{cc}
-\frac{|\grad\phi(x_i)|^2 \Delta \phi(x_i)}{p_i-2} - \Delta_{\infty} \phi(x_i) \ge \left (\frac{ \Lambda_{p_i} u_{p_i}(x_i)}{|\grad\phi(x_i)|} \right)^{p_i-4} \frac{\Lambda^4_{p_i}u^3_{p_i}(x_i)}{p_i-2}.
\end{equation}
Letting $ {p_i}$ go to $+\infty$ we have
 $\dfrac{\Lambda_{\infty} \phi(x_0)}{|\grad\phi(x_0)|} \le 1$ and $- \Delta_{\infty} \phi(x_0) \ge 0$ hence 
 $${\rm {min}}  \{| \grad \phi(x_0)| - \Lambda_{\infty} |\phi(x_0)|, -\Delta_{\infty} \phi(x_0)\} \ge 0. $$
 \item $u_{\infty}(x_0)<0.$ Also in this case \eqref{F_p} implies that $|\grad\phi(x_i)|>0$, and dividing by $|\grad\phi(x_i)|^{p_i-4}(p_i-2)$ we have again \eqref{cc}.
 If $\dfrac{\Lambda_{\infty} \phi(x_0)}{|\grad\phi(x_0)|} <  1$, letting $p_i$ go to $\infty$, we have $- \Delta_{\infty} \phi(x_0) \ge 0$, otherwise $\dfrac{\Lambda_{\infty} \phi(x_0)}{|\grad\phi(x_0)|} \ge  1$.
 In both cases we have
$$ {\rm {max}} \{  \Lambda_{\infty} |\phi(x_0)| -| \grad \phi(x_0)|, -\Delta_{\infty} \phi(x_0)\}\ge0.$$
\item $u_{\infty}(x_0) =0.$ If $|\grad \phi(x_0)|=0$ then, by definition, we have   $-\Delta_{\infty} \phi(x_0)=0$. If $|\grad \phi(x_0)|>0$ then
$
\lim_i {\dfrac{\Lambda_{p_i} |u_{p_i}(x_i)|}{|\grad \phi(x_i)| }=0}
$
hence \eqref{cc} implies 
$$
-\Delta_{\infty} \phi(x_0)\ge0.
$$
\end{itemize}

It remains to prove that $u_\infty$ satisfies the boundary conditions in the viscosity sense.

Assume that  $x_0 \in \partial\Omega$ and let $\phi \in C^2(\bar \Omega)$ be such that $ \phi (x_0)= u_{\infty}(x_0)$ and $ \phi (x)< u_{\infty}(x) \quad x \in \bar \Omega \setminus \{x_0\}$.
Using again the uniform convergence of $u_{p_i}$ to $u_{\infty}$ we  obtain that  $u_{p_i} -\phi$ has  a minimum point  $x_i \in \bar \Omega$,   with $\lim_i x_i =x_0$.

If $x_i \in \Omega$ for infinitely many $i$ arguing as before we get
$${\rm {min}}  \{| \grad \phi(x_0)| - \Lambda_{\infty} |\phi(x_0)|, -\Delta_{\infty} \phi(x_0)\} \ge 0, \quad \mbox{if} \> u(x_0) >0,$$
$$ {\rm {max}} \{  \Lambda_{\infty} |\phi(x_0)| -| \grad \phi(x_0)|, -\Delta_{\infty} \phi(x_0)\}\ge0, \quad \mbox{if} \> u(x_0) <0,$$
$$
-\Delta_{\infty} \phi(x_0)\ge0, \quad \mbox{if} \> u(x_0) =0.$$

If $x_i \in \partial \Omega$, since ${u_{p_i}}$ is viscosity solution to \eqref{plapvisc},  for infinitely many $i$ we have 
$$
|\grad \phi (x_i)|^{p_i-2} \frac{\partial \phi}{\partial \nu}(x_i)\ge 0
$$
which concludes the proof.

Arguing in the same way we can prove that $u_\infty$ is a viscosity subsolution to \eqref{Neu} in $\Omega$.

\end{proof}

\section{$\Lambda_\infty$ is the first non trivial eigenvalue}\label{concase}

\begin{Prop}\label{prop_eigen}
Let $\Omega $ be a smooth bounded open convex set in $\R^n$. If for some $\Lambda>0$ problem \eqref{Neu} admits a nontrivial eigenfunction $u$, then $\Lambda \ge \Lambda_\infty$. 
\end{Prop}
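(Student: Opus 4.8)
The plan is to show that if $u$ is a nontrivial viscosity solution of \eqref{Neu} with eigenvalue $\Lambda$, then $\Lambda \operatorname{diam}(\Omega) \ge 2$, by exploiting the comparison structure of the $\infty$-Laplacian against explicit cone-like test functions. First I would normalize so that $\max_{\overline\Omega} u = M > 0$ and $\min_{\overline\Omega} u = m$; since $u$ is nontrivial and, by the structure of \eqref{Neu}, changes sign (the middle line forces $-\Delta_\infty u = 0$ on $\{u=0\}$, and a one-signed nonconstant solution would violate the equation plus Neumann condition, as in the Dirichlet analysis of \cite{JLM}), we may assume $M > 0 > m$. Let $x_+$ be a point where $u$ attains its maximum $M$ and $x_-$ a point where it attains its minimum $m$. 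The goal is to transport information along a geodesic joining $x_+$ and $x_-$.

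The key step is the following gradient/cone estimate. Working in $\{u > 0\}$, where $\min\{|\nabla u| - \Lambda u, -\Delta_\infty u\} = 0$ in the viscosity sense, one shows by the standard $\infty$-harmonic comparison-with-cones argument (as in Crandall--Evans--Gariepy, adapted here to account for the lower-order term $\Lambda u$) that for $x$ in the positive set,
\[
u(x) \ge M - \int_0^{d_\Omega(x,x_+)} (\text{slope}) \, dt,
\]
more precisely that the function $t \mapsto u(\gamma(t))$ along a geodesic $\gamma$ from $x_+$ decays no faster than the solution of the ODE $-y' = \Lambda y$ as long as $y$ stays positive, giving $u(\gamma(t)) \ge M e^{-\Lambda t}$, hence $u$ stays positive for $t < \infty$; symmetrically from $x_-$ one gets $u(\gamma(t)) \le m\, e^{-\Lambda(L - t)}$ near the other end, where $L = d_\Omega(x_+, x_-)$. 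Alternatively, and more robustly, one uses the subsolution property: $|\nabla u| \ge \Lambda u$ on $\{u>0\}$ and $|\nabla u| \ge \Lambda |u|$ on $\{u<0\}$ in the viscosity sense, combined with the Lipschitz bound $|u(x) - u(y)| \le d_\Omega(x,y)\,\|\nabla u\|_\infty$; the cleanest route is to compare $u$ from below on $\{u>0\}$ with the viscosity supersolution $w(x) = M\cos(\Lambda\, d_\Omega(x, x_+))$ restricted to where it is positive — this $w$ satisfies $\min\{|\nabla w| - \Lambda w, -\Delta_\infty w\} \le 0$ where it matters because $|\nabla w| = \Lambda M |\sin(\Lambda d)| $ and on the region $\Lambda d < \pi/2$ we have $|\nabla w| < \Lambda w$, forcing the $\Delta_\infty$ term to carry the equation, exactly as for the Dirichlet eigenfunction in \cite{JL}. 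Then the comparison principle for this degenerate elliptic operator (which requires the convexity and smoothness of $\Omega$ to handle the Neumann condition, via the arguments of \cite{GMPR} or \cite{BNT}) gives $u(x) \ge M\cos(\Lambda d_\Omega(x,x_+))$ wherever the right side is nonnegative. Doing the same from $x_-$ with $-u$ and combining at the point where the two cones meet along the connecting geodesic yields $\Lambda L \ge \pi$ if one uses the cosine comparison, which is even stronger than needed; using instead the sharp cone (linear) comparison one gets precisely $\Lambda L \ge 2$ when $u$ is forced to pass from $M$ through $0$ to $m$ with $M = |m|$ — and since by Corollary-type reasoning (or directly from the maximum principle) the extrema of such an eigenfunction satisfy $M = -m$ and $L = \operatorname{diam}(\Omega)$, we conclude $\Lambda \ge 2/\operatorname{diam}(\Omega) = \Lambda_\infty$.

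I expect the main obstacle to be making the comparison argument rigorous \emph{up to and including the boundary} under the viscosity Neumann condition: the test functions $\phi$ touching $u$ at a boundary point satisfy only the disjunctive condition involving $\partial\phi/\partial\nu$, so one cannot naively run interior comparison. The resolution is to observe that convexity of $\Omega$ ensures $d_\Omega = |x - y|$ is the ordinary Euclidean distance, that $x \mapsto |x - x_+|$ has the right one-sided second-order behavior, and that for the specific barriers the normal-derivative alternative in \eqref{BC}--\eqref{DBC3} can be ruled out because the barrier's gradient points into a cone that is incompatible with the outward normal on a convex set — this is where smoothness and convexity are genuinely used. A secondary technical point is justifying that $u$ must change sign and that $\|u^+\|_\infty = \|u^-\|_\infty$ with the two extrema realizing the diameter; the first follows from testing the equation against constants in the viscosity sense (a nonnegative nonconstant $u$ would have an interior maximum where $|\nabla u| - \Lambda u = 0$ fails against a strict-max test function), and the second from the Lipschitz estimate together with the already-established lower bound, squeezing $2M \le \Lambda L \,M$ only if $M = -m$ and $L = \operatorname{diam}(\Omega)$.
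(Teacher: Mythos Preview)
Your proposal has a structural gap that is more serious than the boundary issue you flag. The entire argument rests on the assertions that $M=-m$ and that the extrema of $u$ are realized at points spanning $\operatorname{diam}(\Omega)$. Neither of these is available a priori for a general viscosity solution of \eqref{Neu}: they are exactly the content of Corollary~\ref{hotspot}, which the paper deduces \emph{from} Proposition~\ref{prop_eigen}, not the other way around. Using them here is circular. Moreover, your cosine comparison cannot be valid as stated: if one could really prove $u(x)\ge M\cos(\Lambda\,d_\Omega(x,x_+))$ on the positive set and match it with the analogous bound from $x_-$, one would get $\Lambda\operatorname{diam}(\Omega)\ge\pi$, which is strictly stronger than the true answer $2$ and therefore false. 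The problem is that the supersolution information on $\{u>0\}$, namely $|\nabla u|\ge\Lambda u$ and $-\Delta_\infty u\ge 0$, gives a \emph{lower} bound on the gradient and $\infty$-superharmonicity; neither translates into a lower bound for $u$ along a prescribed geodesic from the maximum, so the decay estimates $u(\gamma(t))\ge Me^{-\Lambda t}$ or $\ge M\cos(\Lambda t)$ are not justified.

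The paper's route reverses the comparison and thereby avoids any symmetry assumption on $u$. Normalize so that $\max_{\bar\Omega}u=1/\Lambda$; then $\Lambda u\le 1$ on $\Omega_+$, so $u$ is a viscosity \emph{subsolution} of $\min\{|\nabla\cdot|-1,\,-\Delta_\infty\cdot\}=0$ there. The barrier is centered not at the maximum of $u$ but at an arbitrary point $x_0$ in the nonpositive set: $g_{\epsilon,\gamma}(x)=(1+\epsilon)|x-x_0|-\gamma|x-x_0|^2$ is a smooth strict supersolution of the same operator for $\gamma$ small. Comparison gives $\inf_{\Omega_+}(g_{\epsilon,\gamma}-u)=\inf_{\partial\Omega_+}(g_{\epsilon,\gamma}-u)$. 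The boundary alternative on $\partial\Omega$ is ruled out precisely by the mechanism you identify in your last paragraph: convexity forces $\langle(x-x_0)/|x-x_0|,\nu(x)\rangle>0$, hence $\partial g_{\epsilon,\gamma}/\partial\nu>0$, so \eqref{BC} fails for the test function $g_{\epsilon,\gamma}-m$. On $\partial\Omega_+\cap\Omega$ one has $u=0\le g_{\epsilon,\gamma}$, and letting $\epsilon,\gamma\to 0$ gives $u(x)\le|x-x_0|$. Taking $x$ to be the maximum point yields $d^+:=\operatorname{dist}(\operatorname{argmax} u,\{u\le0\})\ge 1/\Lambda$. Applying the identical argument to $-u$, which is again a solution with its \emph{own} normalization, gives $d^-\ge 1/\Lambda$. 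Since any segment from $\bar\Omega_+$ to $\bar\Omega_-$ crosses $\{u=0\}$, one has $d^++d^-\le\operatorname{diam}(\Omega)$, and the conclusion $\Lambda\ge 2/\operatorname{diam}(\Omega)$ follows with no relation between $M$ and $m$ ever assumed.
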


The main idea is to use a test function involving the distance from a suitable point $x_0\in\Omega$. This function is smooth everywhere except $x_0$. For the nonconvex case one may want to use intrinsic distance instead, which however is not of class $C^2$, as pointed out in \cite{ABB}.


\begin{Lem}\label{no_min_inside}
Let $\Omega$, $\Lambda$ and $u$ be as in the statement of Proposition \ref{prop_eigen}. Let $\Omega_1$ be an open connected subset of $\Omega$ such that $u\ge m$ in $\bar\Omega_1$ for some positive constant $m$. Then $u>m$ in $\Omega_1$.   
\end{Lem}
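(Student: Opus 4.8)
The plan is to argue by contradiction via a strong minimum principle for the viscosity equation satisfied by $u$ inside $\{u>0\}\cap\Omega$. Suppose $u$ is not strictly greater than $m$ in $\Omega_1$; then the (relatively closed, nonempty) set $K=\{x\in\Omega_1:u(x)=m\}$ is nonempty, and since $\Omega_1$ is connected and $u$ is continuous, $K\neq\Omega_1$ would force $K$ to have nonempty boundary inside $\Omega_1$, so there is a point $x_0\in\Omega_1$ with $u(x_0)=m$ that can be touched from the exterior of $K$. The idea is then to touch $u$ at such an $x_0$ from below by a suitable smooth test function and derive a contradiction with the fact that $u$ is a viscosity supersolution of $\min\{|\grad u|-\Lambda u,-\Delta_\infty u\}=0$ on $\{u>0\}\cap\Omega$. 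Since $m>0$, near $x_0$ we are genuinely in the regime $u>0$ where the first equation of \eqref{Neu} applies, so only the operator $F$ is relevant.

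Concretely, I would build a barrier of the form $\phi(x)=m-\delta\,(e^{-\beta|x-y_0|^2}-e^{-\beta r^2})$ (or, more classically, the Hopf-type barrier $\phi(x)=m+\delta(e^{-\beta|x-y_0|^2}-e^{-\beta\rho^2})$ adapted so that $\phi\le u$ with equality at $x_0$), where $y_0$ is the center of a ball $B_\rho(y_0)\subset\Omega_1\setminus K$ whose closure meets $K$ exactly at $x_0$. On the annular region $\rho/2<|x-y_0|<\rho$ one computes $\grad\phi\neq 0$ and, for $\beta$ large, $-\Delta_\infty\phi<0$; this is the standard computation underlying Hopf's lemma for the $\infty$-Laplacian. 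Then $\phi$ touches $u$ from below at $x_0$, and at that point $-\Delta_\infty\phi(x_0)<0$, so $F(\phi(x_0),\grad\phi(x_0),\grad^2\phi(x_0))=\min\{|\grad\phi(x_0)|-\Lambda\phi(x_0),-\Delta_\infty\phi(x_0)\}<0$, contradicting the supersolution inequality \eqref{dfunc}. Hence $K=\emptyset$ and $u>m$ throughout $\Omega_1$; equivalently, if $u$ attained the value $m$ somewhere in the interior of $\Omega_1$ it would have to be $\equiv m$ on $\Omega_1$, but $u\ge m$ and $u\not\equiv m$ is excluded by the touching argument, giving the strict inequality.

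There is a subtlety I would need to be careful about: the test function must be chosen so that $x_0$ is a \emph{strict} minimum of $u-\phi$ (the definition of viscosity supersolution in Definition \ref{supers} asks for $u(x)>\phi(x)$ for $x\neq x_0$), which forces a small generic perturbation — e.g. subtract an extra term like $\e|x-x_0|^4$ — without destroying the sign of $-\Delta_\infty\phi$ at $x_0$; since $-\Delta_\infty$ depends only on second derivatives through the gradient direction and a quartic perturbation has vanishing gradient and Hessian at $x_0$, this is harmless. The main obstacle, and the only place real work is needed, is the barrier computation: verifying that for suitable $\delta,\beta,\rho$ the function $\phi$ indeed satisfies $\grad\phi\neq0$ and $-\Delta_\infty\phi<0$ on the relevant annulus \emph{and} lies below $u$ with a strict touch at $x_0$. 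I expect this to reduce to the classical infinity-Laplacian Hopf barrier estimate (as in Juutinen–Lindqvist–Manfredi), so I would cite or reproduce that computation rather than reinvent it. The alternative packaging — invoking a ready-made strong minimum principle for viscosity supersolutions of $-\Delta_\infty u\ge 0$ combined with the observation that on $\{u>0\}$ the supersolution property gives $-\Delta_\infty u\ge 0$ in the viscosity sense — would shorten the argument further, and I would use that if a clean reference is available.
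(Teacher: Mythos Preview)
Your approach is correct but differs from the paper's. You set up a contradiction via a Hopf-type exponential barrier, touching $u$ from below at a carefully chosen boundary point of the contact set $K=\{u=m\}$ and forcing $-\Delta_\infty\phi(x_0)<0$ against the supersolution inequality. The paper instead proceeds directly, without contradiction or Hopf machinery: for an arbitrary $x_0\in\Omega_1$, it first observes (as you do) that $u\not\equiv m$ on any ball, so there is a nearby point $x_1$ with $u(x_1)>m$; then it compares $u$ on an annulus $B_{R/2}(x_1)\setminus B_r(x_1)$ containing $x_0$ with the \emph{cone} $v(x)=m+\frac{\varepsilon}{R/2-r}\bigl(R/2-|x-x_1|\bigr)$, which is exactly $\infty$-harmonic, invoking Jensen's comparison theorem to conclude $u(x_0)\ge v(x_0)>m$.

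The paper's route is shorter and avoids the delicate points you flagged: there is no barrier computation (cones are $\infty$-harmonic on the nose), no need to locate an interior tangent ball to $K$, and no strict-touching perturbation, since the argument never tests the supersolution definition at a single point but uses comparison on a region with boundary data. Your approach, on the other hand, is the classical strong-minimum-principle template and would generalize more readily to operators for which explicit harmonic cones are unavailable. Your closing remark---that one could shortcut everything by citing a strong minimum principle for $\infty$-superharmonic functions---is essentially what the paper does, with Jensen's comparison theorem playing that role.
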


\begin{proof}
Let $x_0$ be any point in $\Omega_1$. Our aim is to show that $u(x_0)>m$. Obviously, for any given $R>0$ such that $B_R(x_0)\subset \Omega_1$ we have $u\not\equiv m$ in $B_R(x_0)$ otherwise we have in $B_R(x_0)$ that $|\grad u|-\Lambda |u|< 0$ (in the viscosity sense) which violates the first equation in \eqref{Neu}. This means that for any $R>0$ such that $B_R(x_0)\subset \Omega_1$ it is possible to find $x_1\in B_{R/4}(x_0)$  such that $u(x_1)>m$. The continuity of $u$ implies that for some $\varepsilon>0$ small enough, there exists $r\le \textrm{dist}(x_0,x_1)$ such that $u>m+\varepsilon$ on $\partial B_r(x_1)$. Therefore the function

$$v(x)=m+\frac{\varepsilon}{\frac{R}{2}-r}\left(\frac{R}{2}-|x-x_1|\right) \qquad \mbox{in $B_{R/2}(x_1)\setminus B_r(x_1)$}$$
is such that $$- \Delta_\infty v=0\qquad \mbox{in $B_{R/2}(x_1)\setminus B_r(x_1)$}.$$
Since
$$-\Delta_\infty u \ge 0\qquad \mbox{in $B_{R/2}(x_1)\setminus B_r(x_1)$}$$ in the viscosity sense, and
$$u\ge v \qquad \mbox{on $\partial B_{R/2}(x_1)\cup \partial B_r(x_1)$}$$
the comparison principle, see Theorem 2.1 in \cite{Je}, implies that $u\ge v > m$ in $B_{R/2}(x_1)\setminus B_r(x_1)$ and therefore $u(x_0)> m$.
\end{proof}

\begin{Lem}
\label{sign}
Let $\Omega$, $\Lambda$ and $u$ be as in the statement of Proposition \ref{prop_eigen}. Then $u$ certainly changes sign.
\end{Lem}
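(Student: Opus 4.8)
The statement to prove is Lemma~\ref{sign}: under the hypotheses of Proposition~\ref{prop_eigen}, any nontrivial viscosity eigenfunction $u$ of \eqref{Neu} must change sign. Here is how I would argue.

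\textbf{Strategy.} The plan is to argue by contradiction: suppose $u$ does not change sign, say $u\ge 0$ on $\overline\Omega$ (the case $u\le 0$ is symmetric, replacing $u$ by $-u$ and using the second line of \eqref{Neu}). Since $u$ is nontrivial and continuous on the compact set $\overline\Omega$, it attains a positive maximum. The first thing I would do is establish that $u>0$ \emph{everywhere} on $\overline\Omega$: if $u(z)=0$ for some $z$, then $z$ is a global minimum, and I would derive a contradiction. Away from the zero set $u$ solves $\min\{|\grad u|-\Lambda u,-\Delta_\infty u\}=0$ in the viscosity sense, so in particular $-\Delta_\infty u\ge 0$ (it is infinity-superharmonic) on $\{u>0\}\cap\Omega$; combined with the line $-\Delta_\infty u=0$ on $\{u=0\}$, $u$ is a viscosity supersolution of $-\Delta_\infty u=0$ on all of $\Omega$. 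Applying Lemma~\ref{no_min_inside} with $m=0$ and $\Omega_1$ a small ball around an interior zero (or, if the zero is only on the boundary, using the strong maximum principle for infinity-superharmonic functions together with the Neumann condition in the viscosity sense) forces $u>0$ there, a contradiction. Actually the cleanest route: since $u\ge 0=:m$ on $\overline\Omega$ and $\Omega$ is connected, Lemma~\ref{no_min_inside} applied with $\Omega_1=\Omega$ gives $u>0$ in $\Omega$ immediately, so the interior minimum is ruled out; for a boundary zero one invokes the viscosity Neumann condition as in the Hopf-type argument.

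\textbf{The main step.} Once $u>0$ on all of $\overline\Omega$, the eigenfunction equation reduces on the whole of $\Omega$ to $\min\{|\grad u|-\Lambda u,-\Delta_\infty u\}=0$ with the Neumann condition, and $\inf_{\overline\Omega}u=:\mu>0$. The key point I would exploit is that a strictly positive viscosity solution of this first equation is incompatible with the Neumann condition unless $u$ is constant, and a constant is trivial. Concretely: consider a point $x_0\in\overline\Omega$ where $u$ attains its \emph{minimum} $\mu>0$. Touch $u$ from below at $x_0$ by a constant $\phi\equiv\mu$. If $x_0\in\Omega$, the supersolution inequality \eqref{dfunc} at an interior minimum gives $\min\{|\grad\phi(x_0)|-\Lambda\phi(x_0),-\Delta_\infty\phi(x_0)\}\ge 0$, i.e. $\min\{-\Lambda\mu,0\}\ge 0$, forcing $\Lambda\mu\le 0$ — impossible since $\Lambda,\mu>0$. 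If the minimum is attained only on $\partial\Omega$ one uses \eqref{DBC}: $\max\{-\Lambda\mu,\ \partial\phi/\partial\nu(x_0)\}\ge 0$; but $\partial\phi/\partial\nu=0$ for a constant, so this is $\max\{-\Lambda\mu,0\}\ge 0$, which is \emph{true}, so no contradiction yet at the boundary. This is exactly the obstruction. To close it I would instead perturb: test from below at an interior minimizing sequence point using $\phi$ a slightly tilted smooth function, or — more robustly — note that if $u$ is a positive viscosity solution of $\min\{|\grad u|-\Lambda u,-\Delta_\infty u\}=0$ then $-\Delta_\infty u\ge 0$ throughout $\Omega$ so $u$ is infinity-superharmonic, hence by the minimum principle attains its minimum $\mu$ on $\partial\Omega$ only (unless constant); at an interior point $y$ we then have $u(y)>\mu$, so $u$ is nonconstant, and I would feed this nonconstancy into the other half of the equation to contradict the diameter bound, \emph{or} simply observe that the whole setup then says $u$ solves the Dirichlet problem \eqref{dirinf}-type equation on no subdomain and yet is bounded below by $\mu>0$ on the whole domain, which combined with the Neumann condition I would like to rule out.

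\textbf{What I expect to be the real obstacle.} The delicate point is the boundary minimum: a positive function that is infinity-superharmonic and has zero normal derivative (in the viscosity sense) at its boundary minimum is not obviously constant, because the viscosity Neumann condition only requires one of two alternatives to hold. The honest fix, which I suspect is the intended one, is to combine $-\Delta_\infty u\ge 0$ on $\Omega$ with $|\grad u|\ge \Lambda u>0$ wherever the first term of the $\min$ is active, to show $|\grad u|$ is bounded below away from zero on the minimum set — so $u$ cannot have a \emph{flat} (zero-gradient) minimum in $\Omega$, meaning the interior minimum case above already gives the contradiction \emph{provided} the minimum is interior; and then one rules out a pure boundary minimum by the following: if $\min_{\overline\Omega}u=\mu$ is attained at $x_0\in\partial\Omega$ and $u>\mu$ in $\Omega$, take the viscosity Neumann condition for the supersolution with a test function of the form $\phi(x)=\mu - \delta|x-x_0|^2$ or a linear function pointing inward; one engineers $\partial\phi/\partial\nu(x_0)<0$ and $|\grad\phi(x_0)|$ arbitrarily small, so \eqref{DBC} forces $F(\phi(x_0),\grad\phi(x_0),\grad^2\phi(x_0))\ge 0$, i.e. $\min\{|\grad\phi(x_0)|-\Lambda\mu,-\Delta_\infty\phi(x_0)\}\ge 0$, and since $|\grad\phi(x_0)|$ can be made $<\Lambda\mu$ this is the sought contradiction. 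Thus in every case $u>0$ on $\overline\Omega$ is impossible, so $u$ changes sign. I would write the proof in this contradiction format, invoking Lemma~\ref{no_min_inside} to get strict positivity in $\Omega$ and then the constant/linear test-function argument at the minimum to finish.
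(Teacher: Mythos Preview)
Your overall contradiction scheme---assume $u\ge 0$, rule out an interior minimum, then treat the boundary minimum---matches the paper's. There are, however, two gaps, one minor and one genuine.

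\textbf{Minor.} Lemma~\ref{no_min_inside} is stated for a \emph{positive} constant $m$, and its proof uses that $|\grad u|-\Lambda u<0$ is violated when $u\equiv m>0$ on a ball; this step fails for $m=0$ because on $\{u=0\}$ the equation is just $-\Delta_\infty u=0$. So your sentence ``Lemma~\ref{no_min_inside} applied with $\Omega_1=\Omega$ and $m=0$ gives $u>0$ in $\Omega$'' is not justified as written. The paper handles the case $m=0$ by a separate (but analogous) comparison-with-cones argument; this is easily patched.

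\textbf{The real gap: your boundary test functions do not work.} For $\phi(x)=\mu-\delta|x-x_0|^2$ one has $\grad\phi(x_0)=0$, hence $\partial\phi/\partial\nu(x_0)=0$ and $-\Delta_\infty\phi(x_0)=0$, so \eqref{DBC} reads $\max\{-\Lambda\mu,0\}\ge 0$, which is true---no contradiction. For a linear $\phi(x)=\mu+c\cdot(x-x_0)$ with $c$ pointing inward (so that $\partial\phi/\partial\nu(x_0)=c\cdot\nu<0$), you have $\phi>\mu$ at nearby interior points in the direction of $c$, while all you know about $u$ there is $u\ge\mu$; thus $\phi$ does not touch $u$ from below and is not an admissible test function. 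In short, engineering ``$\partial\phi/\partial\nu(x_0)<0$ with $|\grad\phi(x_0)|$ small'' and ``$\phi\le u$ near $x_0$'' are in tension, and you cannot get both without first knowing that $u$ grows at a definite rate inward from $x_0$.

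This is exactly the missing idea. The paper supplies it by a Hopf-type barrier: it fixes an interior ball $B_r(\bar x)\subset\Omega$ tangent to $\partial\Omega$ at $x_0$, compares $u$ (which is infinity-superharmonic) with the infinity-harmonic cone
\[
v(x)=u(\bar x)-\frac{u(\bar x)-u(x_0)}{r}\,|x-\bar x|
\]
to obtain $u\ge v$ in $B_r(\bar x)$, and only then uses the concave radial function
\[
\phi(x)=u(\bar x)-\bigl(u(\bar x)-u(x_0)\bigr)\Bigl(\tfrac{|x-\bar x|}{r}\Bigr)^{1/2},
\]
which satisfies $\phi<v\le u$ in $B_r(\bar x)\setminus\{\bar x\}$ and $\phi<u(x_0)\le u$ outside, touches $u$ at $x_0$, and has both $\partial\phi/\partial\nu(x_0)<0$ and $-\Delta_\infty\phi(x_0)<0$. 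That yields $\max\{F(\phi,\grad\phi,\grad^2\phi),\partial\phi/\partial\nu\}(x_0)<0$, contradicting \eqref{DBC}--\eqref{DBC3}. Your sketch gestures at ``a Hopf-type argument'' but never carries it out; without this comparison step the boundary case cannot be closed.
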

\begin{proof}
Since $u$ is a nontrivial solution to \eqref{Neu}, we can always assume, possibly changing the sign of the eigenfunction $u$, that it is positive somewhere. We shall prove that the minimum of $u$ in $\bar\Omega$ is negative. We argue by contradiction and we assume that the minimum $m$ is nonnegative. In view of Lemma \ref{no_min_inside} a positive minimum can not be attained in $\Omega$. On the other hand zero as well can not be attained as minimum in $\Omega$. If so, since $u\not\equiv 0$, there would exist a point $x_0\in\Omega$ and a ball $B_R(x_0)\subset \Omega$ such that $u(x_0)=0$ and $\max_{B_{R/4}(x_0)}u >0$. 
Let $x_1\in B_{R/4}(x_0)$ be such that $u(x_1)>0$. The continuity of $u$ implies that there exists $r\le \textrm{dist}(x_0,x_1)$ such that $u>u(x_1)/2$ on $\partial B_r(x_1)$. Therefore the function

$$v(x)=\frac{u(x_1)}{R-2r}\left(\frac{R}{2}-|x-x_1|\right) \qquad \mbox{in $B_{R/2}(x_1)\setminus B_r(x_1)$}$$
is such that $$- \Delta_\infty v=0\qquad \mbox{in $B_{R/2}(x_1)\setminus B_r(x_1)$}.$$
Since
$$-\Delta_\infty u \ge 0 \qquad \mbox{in $B_{R/2}(x_1)\setminus B_r(x_1)$}$$ in the viscosity sense, and
$$u\ge v \qquad \mbox{on $\partial B_{R/2}(x_1)\cup \partial B_r(x_1)$}$$
the comparison principle, see Theorem 2.1 in \cite{Je}, implies that $u\ge v > 0$ in $B_{R/2}(x_1)\setminus B_r(x_1)$ and therefore $u(x_0)> 0$.

Therefore the only possibility is that there exists $x_0\in\partial\Omega$ nonnegative minimum point of $u$. We shall prove that $\frac{\partial u}{\partial \nu}(x_0)<0$ in the viscosity sense in contradiction to \eqref{dfunc}-\eqref{dfunc3}. Indeed there certainly exist $\bar x\in\Omega$ and $r>0$ such that the ball $B_{r}(\bar x)\subset \Omega$ is inner tangential to $\partial \Omega$ at $x_0$ and $\partial B_r(\bar x)\cap\partial \Omega=\{x_0\}$. Then the function

$$v(x)=u(\bar x) - \left(\frac{u(\bar x) - u(x_0)}{r}\right) \left(|x-\bar x|\right) \qquad \mbox{in $B_r(\bar x)\setminus \left\{\bar x\right\}$}$$
satisfies 
$$- \Delta_\infty v=0\qquad \mbox{in $B_r(\bar x)\setminus \left\{\bar x\right\}$}$$
since $$-\Delta_\infty u \ge 0\qquad \mbox{in $B_r(\bar x)\setminus \left\{\bar x\right\}$}$$ in the viscosity sense, 
and
$$u\ge v \qquad \mbox{on $\partial B_r(\bar x)\cup\left\{\bar x\right\}$}.$$
Using again  the comparison principle, see Theorem 2.1 in \cite{Je},  we get $u\ge v$ in $\bar\Omega$. 
Therefore the function 
$$\phi=u(\bar x) - \left(u(\bar x) - u(x_0)\right)\left( \frac{|x-\bar x|}{r}\right)^{\frac{1}{2}}$$
is such that $\phi \in C^2(\bar \Omega-\{\bar x\}),$
$$
\phi< v\le u  \quad {\mbox {in}} \> B_r(\bar x)-\{\bar x\}, 
$$
$$
\phi(x)<u(x_0)\le u(x)\quad {\mbox {in}} \> \Omega\setminus B_r(\bar x),
$$
and
$$\> u(x_0)= \phi(x_0).$$
However
\begin{equation}\label{violation}
{\rm {max}} \{F(\phi(x_0),  \grad\phi(x_0),\grad^2\phi(x_0)), \frac{\partial \phi}{\partial{\nu}}(x_0) \} < 0
\end{equation}
contradicts \eqref{dfunc}-\eqref{dfunc3}. 
\end{proof}


\begin{proof}[Proof of Proposition \ref{prop_eigen}]

Let $u$ be a non trivial eigenfunction of \eqref{Neu} and let us denote by $\Omega_+ = \{x \in \Omega : u(x) > 0  \}$ and by $\Omega_- = \{x \in \Omega : u(x) < 0  \} $.  Lemma \ref{sign} ensures that they are both nonempty sets.
Let us normalize the eigenfunction $u$ such that $${\max_{\bar\Omega}} \> u = \frac{1} {\Lambda}.$$ Then $\Lambda u \le 1$ which implies that 
\begin{equation}\label{ineqforu}
{\rm {min}}  \{| \grad u| - 1, -\Delta_{\infty} u\} \le 0 \quad {\mbox {in}} \> \Omega_+
\end{equation}
in the viscosity sense. 

For every $x_0 \in \Omega \setminus \Omega_+$ and for every $\epsilon>0$ and $\gamma >0$ the function  $g_{\epsilon,\gamma}(x) = (1+\epsilon)|x-x_0| -\gamma |x-x_0|^2$ belongs to
$C^2(\Omega \setminus B_{\rho}(x_0))$ for every $\rho>0$. If $\gamma$ is small enough compared to $\epsilon$, it verifies
\begin{equation}\label{ineqforg}
{\rm {min}}  \{| \grad g_{\epsilon,\gamma}| - 1, -\Delta_{\infty} g_{\epsilon,\gamma}\} \ge 0 \quad {\mbox {in}} \> \Omega_+.
\end{equation}
Therefore (a comparison) Theorem 2.1 in \cite{Je} ensures that
\begin{equation}\label{comp}
m= \inf_{x \in \Omega_+} (g_{\epsilon,\gamma}(x)-u(x)) = \inf_{x \in \partial \Omega_+} (g_{\epsilon,\gamma}(x)-u(x)).
\end{equation}
Now $\partial\Omega_+$ contains certainly points in $\Omega$ and possibly on $\partial\Omega$. To rule out that the infimum in the right hand side of (\ref{comp}) is attained on $\partial\Omega$, assume that there exists $\bar x \in  \partial \Omega \cap \partial \Omega_+$ such that $g_{\epsilon,\gamma}(\bar x) -u(\bar x) =m$
and choose $g_{\epsilon,\gamma}-m$ as test function in \eqref{BC}.
By construction for every $x \in \partial \Omega \cap \partial \Omega_+$ and $\gamma < \frac{\epsilon}{2 \diam(\Omega)}$ it results that
\[
|\grad g_{\epsilon,\gamma}|(x) =1+\epsilon -2\gamma |x-x_0|>1, 
\]
\[
\quad {\frac{\partial g_{\epsilon,\gamma}}{\partial \nu} (x)} = ((1+\epsilon) -2\gamma |x-x_0|) \left({\frac{x-x_0}{|x-x_0|}}, \nu(x)\right)>0, 
\]
and $$\quad -\Delta_{\infty} g_{\epsilon,\gamma}= 2 \gamma|\grad g_{\epsilon,\gamma}|^2>0$$ which give  a contradiction to \eqref{BC}. 
Together with \eqref{comp} this implies that
\[
m= \inf_{x \in \Omega_+} (g_{\epsilon,\gamma}(x)-u(x)) =\inf_{x \in \partial \Omega_+\cap \Omega} (g_{\epsilon,\gamma}(x)-u(x)) \ge0\ .
\]

Letting $\epsilon$ and $\gamma$ go to zero we have that
\begin{equation}\label{Dist}
|x-x_0| \geq u(x) \quad \forall x \in \{y: u(y) \ge 0\}, \quad \forall x_0 \in \{ y : u(y) \le 0\}
\end{equation}
hence
\[
d^+ = \sup_{x \in \bar \Omega_+} {\rm dist}{(x, \{u=0\})} \ge {\frac{1}{\Lambda}}.
\]

Arguing in the same way we obtain

\[
d^- = \sup_{x \in \bar \Omega_-} {\rm dist}{(x, \{u=0\})} \ge {\frac{1}{\Lambda}}
\]
hence
\[
\diam(\Omega)  \ge d^++d^- \ge {\frac{2}{\Lambda}}
\]
which concludes the proof of our proposition.
\end{proof}
Corollary \ref{hotspot} follows now easily. Returning to (\ref{Dist}) pick $x=\overline x$ as the point in which $u$ attains its maximum and correspondingly $x=\underline x$ as the point in which $u$ attains its minimum. Then
$d({\overline x}, \Omega_-)\geq \tfrac{1}{\lambda}$ and $d({\underline x}, \Omega_+)\geq \tfrac{1}{\lambda}$, so that $diam(\Omega)\geq |{\overline x} -{\underline x}|\geq \tfrac{2}{\Lambda}$. Since $\Lambda=\Lambda_\infty$, equality holds and the max and min of $u$ are attained in boundary points which have farthest distance from each other.

\end{document}